\documentclass[a4paper,12pt]{article}
\usepackage{color}
\usepackage[english]{babel}
\usepackage{amsfonts}
\usepackage{amsmath}
\usepackage{amsthm}
\usepackage{amssymb}
\usepackage{bbm}
\usepackage{mathrsfs}
\usepackage{esint}
\usepackage{faktor}
\usepackage[utf8]{inputenc}
\usepackage{afterpage}
\usepackage{graphicx}

\newcommand{\N}{\mathbb{N}}

\newcommand{\R}{\mathbb{R}}

\renewcommand{\epsilon}{\varepsilon}
\renewcommand{\phi}{\varphi}

\newtheorem{lemma}{Lemma}[section]
\newtheorem{thm}[lemma]{Theorem}

\theoremstyle{definition}

\newtheorem{rmk}[lemma]{Remark}
\newtheorem{ex}[lemma]{Example}

\numberwithin{equation}{section}
\DeclareMathOperator*{\esssup}{ess \, sup}
\DeclareMathOperator*{\essinf}{ess \, inf}
\DeclareMathOperator*{\supp}{supp}

\begin{document}
\title{\textbf{Singular quasilinear convective \\ elliptic systems in $\R^N$}}
\author{
\bf Umberto Guarnotta, Salvatore Angelo Marano\thanks{Corresponding Author.}\\
\small{Dipartimento di Matematica e Informatica, Universit\`a di Catania,}\\
\small{Viale A. Doria 6, 95125 Catania, Italy}\\
\small{\it E-mail: umberto.guarnotta@phd.unict.it, marano@dmi.unict.it}\\
\mbox{}\\
\bf Abdelkrim Moussaoui\\
\small{LMA, Faculty of Exact Sciences, A. Mira Bejaia University,}\\
\small{06000 Bejaia, Algeria}\\
\small{\it E-mail: abdelkrim.moussaoui@univ-bejaia.dz}
}
\date{}
\maketitle
\begin{abstract}
The existence of a positive entire weak solution to a singular quasi-linear elliptic system with convection terms is established, chiefly through perturbation techniques, fixed point arguments, and a priori estimates. Some regularity results are then employed to show that the obtained solution is actually strong.
\end{abstract}
\vspace{2ex}
\noindent\textbf{Keywords:} quasilinear elliptic system, gradient dependence, singular term, entire solution, strong solution.
\vspace{2ex}

\noindent\textbf{AMS Subject Classification:} 35J47, 35J75, 35B08, 35D35.

\section{Introduction and main result}
In this paper, we deal with the problem
\begin{equation}
\label{prob}
\tag{$ {\rm P} $}
\left\{
\begin{alignedat}{2}
-\Delta_p u &= f(x,u,v,\nabla u, \nabla v) \; \; &&\mbox{in} \; \; \R^N, \\
-\Delta_q v &= g(x,u,v,\nabla u, \nabla v) \; \; &&\mbox{in} \; \; \R^N, \\
u,v &> 0 \; \; &&\mbox{in} \; \; \R^N,
\end{alignedat}
\right.
\end{equation}
where $ N \geq 3 $, $ 2-\frac{1}{N} < p,q < N $, $ \Delta_r z := {\rm div}(|\nabla z|^{r-2}\nabla z) $ denotes the $ r $-Laplacian of $ z $ for $ 1 < r < +\infty $, while $ f,g: \R^N \times (0,+\infty)^2 \times \R^{2N} \to (0,+\infty) $ are Carathéodory functions satisfying assumptions $ {\rm H_1} $--$ {\rm H_3} $ below.

Problem \eqref{prob} exhibits three interesting features:
\begin{itemize}
\item The reaction terms $ f $ and $ g $ can be singular at zero.
\item $ f $, $ g $ depend on the gradient of solutions.
\item Equations are set in the whole space $ \R^N $.
\end{itemize}
However, they give rise to some nontrivial difficulties, such as the loss of variational structure and the lack of compactness for Sobolev embedding. This work continues the study started in \cite{MMM}, whose setting was $\R^N$ and convective terms did not appear, along the very recent papers \cite{LMZ,CLM,GM,GMM}, which address analogous questions, but concerning a bounded domain. 

Primarily, we need an appropriate functional framework where to treat the problem, mainly because the integrability properties of solutions and their gradients may differ at infinity, as the example in \cite[p. 80]{G} shows. Accordingly, one is led to employ the so-called Beppo Levi (or homogeneous Sobolev) spaces $\mathcal{D}^{1,r}_0(\R^N)$, systematically studied for the first time by Deny and Lions \cite{DL}. The monographs \cite{G,LL,SS} provide an exhaustive introduction on the topic. 

Let $X:=\mathcal{D}^{1,p}_0(\R^N)\times\mathcal{D}^{1,q}_0(\R^N)$ and let $r'$  denote the conjugate exponent of $r>1$. A pair $(u,v)\in X$ such that $u,v>0$ a.e. in $\R^N$ is called:
\begin{itemize}
\item[1)] \emph{distributional solution} to \eqref{prob} if for every $(\phi_1,\phi_2)\in C^{\infty}_0(\R^N)^2$ one has
\begin{equation}\label{intbyparts}
\begin{split}
\int_{\R^N}|\nabla u|^{p-2}\nabla u\nabla\phi_1 {\rm d}x &= \int_{\R^N} f(\cdot,u,v,\nabla u,\nabla v) \phi_1 {\rm d}x, \\
\int_{\R^N}|\nabla v|^{q-2}\nabla v\nabla\phi_2 {\rm d}x &= \int_{\R^N} g(\cdot,u,v,\nabla u,\nabla v) \phi_2 {\rm d}x;
\end{split}
\end{equation}
\item[2)] \emph{(weak) solution} of \eqref{prob} when \eqref{intbyparts} holds for all $(\phi_1,\phi_2)\in X$;
\item[3)] \emph{`strong' solution} to \eqref{prob} if $|\nabla u|^{p-2}\nabla u, |\nabla v|^{q-2}\nabla v \in W^{1,2}_{\rm loc}(\R^N)$ and the differential equations are satisfied a.e. in $\R^N$.
\end{itemize}
Obviously, both 2) and 3) force 1), whilst reverse implications turn out generally false; see also Remark \ref{equivalence}. Moreover, as observed at p. 48 of \cite{SS}, problems in unbounded domains may admit strong solutions that are not weak or vice-versa. So, the search for strong solutions appears of some interest in this context.

Roughly speaking, our technical approach proceeds as follows. We first solve an auxiliary problem \eqref{aux2},
$\epsilon>0$, obtained by shifting variables of reactions, which avoids singularities. To do this, nonlinear regularity theory, a priori estimates, Moser's iteration, trapping region, and fixed point arguments are employed. Unfortunately, bounds from above alone do not allow to get a solution of \eqref{prob}: treating singular terms additionally requires some estimates from below. Theorem 3.1 in \cite{DM} ensures that solutions to \eqref{aux2} turn out locally greater than a positive constant regardless of $\epsilon$. Thus, under hypotheses
${\rm H}_1$--${\rm H}_3$ below, we can construct a sequence $\{(u_\epsilon,v_\epsilon)\}\subseteq X$ such that $(u_\epsilon,v_\epsilon)$ solves \eqref{aux2} for all $\epsilon>0$ and whose weak limit as $\epsilon\to 0^+$ is a distributional solution to \eqref{prob}; cf. Lemma \ref{distrsol}. Next, a localization-regularization reasoning (see Lemma \ref{weaksol}) shows that 
$$(u,v)\;\mbox{distributional solution}\implies (u,v)\;\mbox{weak solution.}$$
Through a recent differentiability result \cite[Theorem 2.1]{CM} one then has
$$(u,v)\;\mbox{distributional solution}\implies (u,v)\;\mbox{strong solution;}$$
cf. Lemma \ref{strongsol}. Further, $(u,v)\in C^{1,\alpha}_{\rm loc}(\R^N)$ once condition ${\rm H}_3$ is slightly strengthened; see Remark \ref{regularity}. 

Singular elliptic problems in $\R^N$ have a long history, that traces back to \cite{KS,D,C,CK,LS} for semi-linear equations. More recent results, involving also systems, can be found in \cite{MKT,DS,MMM,SLARZ} and the references therein.

Henceforth, the assumptions below will be posited. If $1<r<N$ then, by definition, $r^*:=\frac{Nr}{N-r}$.
\begin{itemize}
\item[$ \underline{\rm H_1(f)} $] There exist $\alpha_1\in (-1,0]$, $\beta_1,\delta_1\in [0, q-1)$, $\gamma_1\in [0,p-1)$, $ m_1, \hat{m}_1 > 0 $, and $ a_1 \in L^{s_p}_{\rm loc}(\R^N) $, with $ s_p > p'N $, such that
\begin{equation*}
m_1 a_1(x) s_1^{\alpha_1} s_2^{\beta_1} \leq f(x,s_1,s_2,\xi_1,\xi_2) \leq \hat{m}_1 a_1(x) \left( s_1^{\alpha_1} s_2^{\beta_1} + |\xi_1|^{\gamma_1} + |\xi_2|^{\delta_1} \right)
\end{equation*}
in $\R^N \times (0,+\infty)^2 \times\R^{2N}$. Moreover, $\displaystyle{\essinf_{B_\rho} a_1 > 0}$ for all $\rho>0$.
\item[$ \underline{\rm H_1(g)} $] There exist $\beta_2\in (-1,0]$, $\alpha_2, \gamma_2\in [0,p-1)$, $\delta_2 \in [0,q-1)$, $ m_2, \hat{m}_2 > 0 $, and $ a_2 \in L^{s_q}_{\rm loc}(\R^N) $, with $ s_q > q'N $, such that
\begin{equation*}
m_2 a_2(x) s_1^{\alpha_2} s_2^{\beta_2} \leq g(x,s_1,s_2,\xi_1,\xi_2) \leq \hat{m}_2 a_2(x) \left( s_1^{\alpha_2} s_2^{\beta_2} + |\xi_1|^{\gamma_2} + |\xi_2|^{\delta_2} \right)
\end{equation*}
in $ \R^N \times (0,+\infty)^2 \times\R^{2N} $. Moreover, $\displaystyle{\essinf_{B_\rho} a_2 > 0}$ for all $\rho>0$.
\item[$ \underline{\rm H_1(a)} $] There exist $ \zeta_1,\zeta_2 \in (N,+\infty] $ such that $ a_i \in L^1(\R^N) \cap L^{\zeta_i}(\R^N) $, $ i=1,2 $, where
\begin{equation*}
\frac{1}{\zeta_1} < 1 - \frac{p}{p^*} - \theta_1, \quad \frac{1}{\zeta_2} < 1 - \frac{q}{q^*} - \theta_2,
\end{equation*}
with
\begin{equation*}
\begin{split}
\theta_1 :=\max\left\{\frac{\beta_1}{q^*},\frac{\gamma_1}{p},\frac{\delta_1}{q}\right\} < 1 - \frac{p}{p^*}, \quad\theta_2 :=\max\left\{\frac{\alpha_2}{p^*},\frac{\gamma_2}{p},\frac{\delta_2}{q}\right\}< 1-
\frac{q}{q^*}.
\end{split}
\end{equation*}
\item[$ \underline{\rm H_2} $] If $ \eta_1 := \max\{\beta_1,\delta_1\} $ and $ \eta_2 := \max\{\alpha_2,\gamma_2\} $ then
\begin{equation*}
\eta_1 \eta_2 < (p-1-\gamma_1)(q-1-\delta_2).
\end{equation*}
\item[$ \underline{\rm H_3} $] One has
\begin{equation*}
\begin{split}
\frac{1}{s_p} + \max \left\{ \frac{\gamma_1}{p},\frac{\delta_1}{q} \right\} \leq \frac{1}{2}, \qquad
\frac{1}{s_q} + \max \left\{ \frac{\gamma_2}{p},\frac{\delta_2}{q} \right\} \leq \frac{1}{2}.
\end{split}
\end{equation*}
%
\end{itemize}

\begin{ex}
${\rm H_1(a)}$ is fulfilled once $a_1,a_2 \in L^1(\R^N) \cap L^\infty(\R^N) $ and
\begin{equation*}
\max \left\{ \frac{\beta_1}{q^*},\frac{\gamma_1}{p},\frac{\delta_1}{q} \right\}< 1-\frac{p}{p^*}, \quad
\max \left\{ \frac{\alpha_2}{p^*},\frac{\gamma_2}{p},\frac{\delta_2}{q} \right\}< 1-\frac{q}{q^*}.
\end{equation*}
In fact, it suffices to choose $ \zeta_1:= \zeta_2:= +\infty $.
\end{ex}

\begin{rmk}
By interpolation (see, e.g., \cite[Proposition 2.1]{MMM}), condition $ {\rm H_1(a)} $ entails $ a_i \in L^{\sigma_{i,j}}(\R^N) $, $ i=1,2 $, where:
\begin{itemize}
\item[(i)] $ \sigma_{1,j} := \frac{1}{1-t_j} $, $ j=1,2,3,4 $, with
\begin{equation*}
t_1 = \frac{\alpha_1+1}{p^*} + \frac{\beta_1}{q^*}, \quad t_2 = \frac{1}{p^*} + \frac{\beta_1}{q^*}, \quad t_3 = \frac{1}{p^*} + \frac{\gamma_1}{p}, \quad t_4 = \frac{1}{p^*} + \frac{\delta_1}{q};
\end{equation*}
\item[(ii)] $ \sigma_{2,j} := \frac{1}{1-t_j} $, $ j=1,2,3,4 $, with
\begin{equation*}
t_1 = \frac{\beta_2+1}{q^*} + \frac{\alpha_2}{p^*}, \quad t_2 = \frac{1}{q^*} + \frac{\alpha_2}{p^*}, \quad t_3 = \frac{1}{q^*} + \frac{\gamma_2}{p}, \quad t_4 = \frac{1}{q^*} + \frac{\delta_2}{q}.
\end{equation*}
\end{itemize}
\end{rmk}
The aim of this paper is to prove the following
\begin{thm}\label{main}
Under hypotheses ${\rm H_1}$--${\rm H_3}$, problem \eqref{prob} admits a weak and strong solution $(u,v) \in X$.
\end{thm}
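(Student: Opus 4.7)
The argument combines the three lemmas outlined in the introduction, so my plan is to carry out their construction in order.

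\emph{Step 1: Perturbation and fixed point.} For each $\epsilon>0$ introduce an auxiliary system \eqref{aux2} obtained from \eqref{prob} by shifting the arguments $s_1, s_2$ of $f,g$ to $s_1+\epsilon, s_2+\epsilon$, which removes the singularity while leaving the behaviour at infinity intact. On a suitably large closed ball of $X$, define the operator $T:(w_1,w_2)\mapsto(u,v)$ by solving the two decoupled equations $-\Delta_p u=f(\cdot,w_1+\epsilon,w_2+\epsilon,\nabla w_1,\nabla w_2)$ and $-\Delta_q v=g(\cdot,w_1+\epsilon,w_2+\epsilon,\nabla w_1,\nabla w_2)$. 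The growth bounds in $\mathrm{H}_1(f)$--$\mathrm{H}_1(g)$, together with the exponent constraints in $\mathrm{H}_1(a)$ and the coupling condition $\mathrm{H}_2$, yield a priori estimates in $X$ that produce a trapping region invariant under $T$; Moser iteration and nonlinear Calder\'on--Zygmund theory then promote these to $L^\infty_{\mathrm{loc}}$ bounds. A Schauder--Tychonoff-type fixed point theorem produces $(u_\epsilon,v_\epsilon)\in X$ solving \eqref{aux2}.

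\emph{Step 2: Uniform estimates and distributional solution.} The estimates above are uniform in $\epsilon$. To handle the singular limit $\epsilon\to 0^+$, invoke \cite[Theorem 3.1]{DM}: for every $\rho>0$ there exists $c_\rho>0$ independent of $\epsilon$ such that $u_\epsilon,v_\epsilon\geq c_\rho$ in $B_\rho$. Up to a subsequence, $(u_\epsilon,v_\epsilon)\rightharpoonup(u,v)$ in $X$. The local lower bounds force $u,v>0$ a.e.\ in $\R^N$ and, in combination with Sobolev embeddings plus the integrability of $a_i$, allow passage to the limit in the weak formulation tested against arbitrary $\phi\in C^\infty_0(\R^N)$; this is Lemma~\ref{distrsol} and yields a distributional solution in the sense of~1).

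\emph{Step 3: Upgrading to weak and strong.} Lemma~\ref{weaksol} upgrades the distributional solution to a weak one through a localization-regularization procedure, approximating arbitrary $(\phi_1,\phi_2)\in X$ by truncated and mollified test functions in $C^\infty_0(\R^N)$ and controlling the tails via $\mathrm{H}_1(a)$. Finally, Lemma~\ref{strongsol} uses $\mathrm{H}_3$ together with the differentiability result \cite[Theorem 2.1]{CM} to show $|\nabla u|^{p-2}\nabla u,|\nabla v|^{q-2}\nabla v\in W^{1,2}_{\mathrm{loc}}(\R^N)$, so that the equations hold a.e.\ in $\R^N$.

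The hard part, I expect, is securing the $\epsilon$-uniform estimates in Steps~1--2. The gradient-dependent terms $|\xi_1|^{\gamma_i},|\xi_2|^{\delta_i}$ in the upper bounds of $f,g$ must be tamed via the subcritical balance $\mathrm{H}_2$ and the integrability of $a_i$ prescribed in $\mathrm{H}_1(a)$, so that the fixed-point argument closes despite the nonlinear gradient coupling; moreover, the lack of compactness of the Sobolev embedding in $\R^N$ must be compensated by the $L^{\zeta_i}\cap L^1$ decay of the weights. Equally delicate is preserving positivity as $\epsilon\to 0^+$: without the uniform local lower bound of \cite[Theorem 3.1]{DM}, the singular reaction terms would generically blow up in the limit.
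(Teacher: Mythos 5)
Your proposal follows the same overall route as the paper: decouple and freeze the right-hand side to define an operator $T_\epsilon$ via Minty--Browder, build a trapping region via Moser iteration and nonlinear potential-theoretic gradient estimates, obtain $(u_\epsilon,v_\epsilon)$ by Schauder--Tychonoff, push $\epsilon\to0^+$ using the uniform estimates and the local lower bound of \cite[Theorem 3.1]{DM}, then upgrade distributional $\to$ weak by localization--regularization and distributional $\to$ strong by \cite[Theorem 2.1]{CM}. Two minor deviations: the paper perturbs only the singular argument of each reaction (so $f(\cdot,u+\epsilon,v,\ldots)$ and $g(\cdot,u,v+\epsilon,\ldots)$), not both, though shifting both is harmless since the non-singular exponents are nonnegative; and it is ${\rm H}_1$ alone that validates the trapping region, while ${\rm H}_2$ enters later, for the $\epsilon$-uniform $X$-bound in Lemma~\ref{unifest}.

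The real gap is in Step~2. You write that weak convergence $(u_{\epsilon},v_{\epsilon})\rightharpoonup(u,v)$ in $X$, the local lower bounds, and Sobolev embeddings ``allow passage to the limit'' against $C^\infty_0$ test functions. But the reactions $f(\cdot,u_\epsilon+\epsilon,v_\epsilon,\nabla u_\epsilon,\nabla v_\epsilon)$ depend on the gradients, which a priori converge only weakly in $L^p\times L^q$; since $\xi\mapsto f(x,s_1,s_2,\xi)$ is nonlinear, weak gradient convergence does not permit passage to the limit. You must establish pointwise a.e.\ convergence $(\nabla u_\epsilon,\nabla v_\epsilon)\to(\nabla u,\nabla v)$. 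The paper does so by first invoking ${\rm H}_3$ (together with the uniform $L^\infty$ bound and the local lower bound) to show the right-hand sides lie in $L^2_{\rm loc}$, then applying \cite[Theorem 2.1]{CM} to bound $\{|\nabla u_\epsilon|^{p-2}\nabla u_\epsilon\}$ in $W^{1,2}_{\rm loc}$, using Rellich--Kondrakov to get strong $L^{p'}(B_\rho)$ compactness of the stress fields, and finally exploiting the $({\rm S})_+$ property of $-\Delta_p$ on $W^{1,p}(B_\rho)$ to deduce strong $W^{1,p}(B_\rho)$ convergence of $u_\epsilon$ and hence a.e.\ gradient convergence along a subsequence. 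Without this bootstrap (and without ${\rm H}_3$, which you list only under the strong-solution step, not here) the limit identification in the distributional formulation does not close. In addition, the dominated-convergence step requires an $L^1$ majorant for $f(\cdot,u_\epsilon+\epsilon,v_\epsilon,\nabla u_\epsilon,\nabla v_\epsilon)\phi_1$: the paper extracts, from the strong local $W^{1,p}\times W^{1,q}$ convergence, fixed $L^p\times L^q$ dominating functions $(h,k)$ for the gradients (via \cite[Theorem~4.9]{B}), a point your sketch omits.
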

\section{Preliminaries}
Let $Z$ be a Hausdorff topological space and let $T:Z\to Z$ be continuous. Following \cite[p. 2]{GD}, the operator $T$ is called compact when $\overline{T(Z)}$ turns out a compact subset of $Z$. If $Z$ is a normed space, $ \{z_n\}\subseteq Z $, and $z\in Z$ then $z_n\to z$ in $Z$ means that the sequence $\{z_n\}$ strongly converges to $z$, while $ z_n \rightharpoonup z $ stands for weak convergence. As usual, $Z^*$ denotes the topological dual of $Z$ and $ Z^2:= Z \times Z $.

Hereafter, $ N \geq 3 $ is a fixed integer, $ B(x,\rho)$ indicates the open ball in $ \R^N $ of radius $\rho>0$ centered at $x\in\R^N$ and $B_\rho:=B(0,\rho)$, while $|E|$ stands for the Lebesgue measure of $E$.

Let $ Z:= Z(\Omega) $ be a real-valued function space on a nonempty measurable set $\Omega \subseteq \R^N$. If $z_1,z_2\in Z$ and $z_1(x)<z_2(x)$ a.e. in $\Omega$ then we simply write $z_1<z_2$. The meaning of $z_1\leq z_2$, etc. is analogous. Put
$$Z_+:=\{ z\in Z: z>0\}.$$
Given $ \{z_n\}\subseteq Z $ and $z\in Z$, the symbol  $ z_n\uparrow z$ signifies that $\{z_n\}$ is monotone increasing and $ z_n(x) \to z(x) $ for almost every $x\in \Omega $. Moreover,
$$ z^\pm := \max\{\pm z,0\},\quad \supp z := \overline{\{x \in\Omega: z(x) \neq 0\}}\, .$$
When $\Omega:=\R^N$ and for every nonempty compact subset $ K $ of $\R^N$ the restriction $z\lfloor_K$ belongs to $Z(K)$ then we write $z\in Z_{\rm loc}(\R^N)$. Similarly, a sequence $\{z_n\}\subseteq Z_{\rm loc}(\R^N)$ is called bounded in $ Z_{\rm loc}(\R^N) $ once the same holds for $ \{z_n\lfloor_K\} $ in $ Z(K) $, with any $K$ as above. 

Let $ 1 < r < N $ and let $ z:\R^N \to \R $ a measurable function. Throughout the paper, $r':=\frac{r}{r-1}$, $r^*:=\frac{Nr}{N-r}$,
\begin{equation*}
\|z\|_r := \left( \int_{\R^N} |z(x)|^r {\rm d}x \right)^{1/r},\quad\|z\|_\infty := \esssup_{x\in\R^N} |z(x)|\, .
\end{equation*}
We now recall the notion and some relevant properties of Beppo Levi's space $ \mathcal{D}^{1,r}_0(\R^N) $, addressing the reader to \cite[Chapter II]{G} for a complete treatment. Put
\begin{equation*}
\mathcal{D}^{1,r} := \left\{ z \in L^1_{\rm loc}(\R^N): |\nabla z| \in L^r(\R^N) \right\}
\end{equation*}
and denote by $ \mathcal{R} $ the equivalence relation that identifies two elements in $ \mathcal{D}^{1,r} $ whose difference is a constant. The quotient set $\mathcal{\dot D}^{1,r}$, endowed with the norm
$$\|z\|_{1,r} :=\left(\int_{\R^N}|\nabla z(x)|^r{\rm d}x\right)^{1/r},$$
turns out complete. Write $\mathcal{D}^{1,r}_0(\R^N)$ for the subspace of $ \mathcal{\dot D}^{1,r} $ defined as the closure of $ C^\infty_0(\R^N) $ under $ \|\cdot\|_{1,r} $, namely
$$\mathcal{D}^{1,r}_0(\R^N) := \overline{C^\infty_0(\R^N)}^{\|\cdot\|_{1,r}}.$$
$\mathcal{D}^{1,r}_0(\R^N)$, usually called Beppo Levi space,  is reflexive and continuously embeds in $L^{r^*}(\R^N)$, i.e., 
\begin{equation}\label{embedding}
\mathcal{D}^{1,r}_0(\R^N) \hookrightarrow L^{r^*}(\R^N).
\end{equation}
Consequently, if $ z \in \mathcal{D}^{1,r}_0(\R^N) $ then $z$ vanishes at infinity, meaning that the set $\{x \in \R^N:|z(x)| \geq \epsilon\} $ has finite measure for any $ \epsilon > 0 $. In fact, by Chebichev's inequality and \eqref{embedding}, one has
\begin{equation*}
|\{x \in \R^N: \, |z(x)| \geq \epsilon \}| \leq \epsilon^{-r^*} \|z\|_{r^*}^{r^*} \leq (c \epsilon^{-1} \|z\|_{1,r})^{r^*} < +\infty,
\end{equation*}
where $ c > 0 $ is the best constant related to \eqref{embedding}; see the seminal paper \cite{T}. 

Hereafter, $ c $, $ c_\epsilon $, and $ c_\epsilon(\cdot)$ will denote generic positive constants, which may change explicit value from line to line. Subscripts and/or arguments emphasize their dependence on a given variable.

To avoid cumbersome expressions, define 
$$ X:= \mathcal{D}^{1,p}_0(\R^N) \times \mathcal{D}^{1,q}_0(\R^N)\, ,\quad
 \|(u,v)\| := \|u\|_{1,p} + \|v\|_{1,q}\quad\forall\, (u,v)\in X .$$
$$\mathcal{C}^1_+ := X_+ \cap C^1_{\rm loc}(\R^N)^2, \quad \mathcal{C}^{1,\alpha}_+ := X_+ \cap C^{1,\alpha}_{\rm loc}(\R^N)^2.$$
$\mathcal{C}^1_+$ and $\mathcal{C}^{1,\alpha}_+$ will be endowed with the topology induced by that of $X$.

The following a priori estimate will play a basic role in the sequel.
\begin{lemma}\label{Mingione}
Let $2-\frac{1}{N} < r < +\infty $, let $ \zeta > N $, and let $ h \in L^1(\R^N) \cap L^\zeta(\R^N) $. If $ z \in \mathcal{D}^{1,r}_0(\R^N)\cap C^1_{{\rm loc}}(\R^N)$ is a weak solution to $-\Delta_r z = h(x)$ in $ \R^N$
then there exists $ c > 0 $, independent of $ z $, such that
\begin{equation*}
\|\nabla z\|_\infty^{r-1} \leq c \inf_{R>0} \left( R^{1-\frac{N}{\zeta}} \|h\|_\zeta + R^{-\frac{N}{r'}} \|\nabla z\|_r^{r-1} \right).
\end{equation*}
\end{lemma}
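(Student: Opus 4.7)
The strategy is to apply a local pointwise gradient bound at every point $x_0 \in \R^N$ and then optimize the scale $R$. Concretely, since $z \in C^1_{\rm loc}(\R^N)$ solves $-\Delta_r z = h$ weakly on the whole of $\R^N$, one may invoke on each ball $B_R(x_0)$ the Kuusi--Mingione pointwise nonlinear potential gradient estimate (valid precisely in the regime $r > 2 - \frac{1}{N}$) to obtain
\begin{equation*}
|\nabla z(x_0)|^{r-1} \leq c\left[\mathbf{I}_1^{|h|}(x_0,R) + \frac{1}{|B_R|}\int_{B_R(x_0)}|\nabla z|^{r-1}\,dy\right],
\end{equation*}
where $\mathbf{I}_1^{|h|}(x_0,R):= \int_0^R \rho^{1-N}\,|h|(B_\rho(x_0))\,\frac{d\rho}{\rho}$ is the truncated first-order Riesz potential of $|h|$.

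I would then reduce each term to the quantities appearing in the statement via H\"older's inequality. For the Riesz potential, the assumption $h \in L^\zeta(\R^N)$ with $\zeta > N$ gives $|h|(B_\rho(x_0)) \leq c\, \rho^{N/\zeta'}\|h\|_\zeta$, and integrating the (convergent, as $N/\zeta<1$) integral $\int_0^R \rho^{-N/\zeta}\,d\rho$ yields
\begin{equation*}
\mathbf{I}_1^{|h|}(x_0,R) \leq c\, R^{1-N/\zeta}\,\|h\|_\zeta.
\end{equation*}
For the average of $|\nabla z|^{r-1}$, H\"older with exponent $r/(r-1)$ gives
\begin{equation*}
\frac{1}{|B_R|}\int_{B_R(x_0)}|\nabla z|^{r-1}\,dy \leq \left(\frac{1}{|B_R|}\int_{B_R(x_0)}|\nabla z|^r\,dy\right)^{(r-1)/r} \leq c\, R^{-N/r'}\,\|\nabla z\|_r^{r-1}.
\end{equation*}

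Combining these three displays yields a bound on $|\nabla z(x_0)|^{r-1}$ whose right-hand side depends only on global norms of $h$ and $\nabla z$ and on $R$, but no longer on $x_0$. Taking the pointwise supremum over $x_0 \in \R^N$ (justified by the assumed $C^1_{\rm loc}$ regularity) and then the infimum over $R>0$ produces the claim.

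The main obstacle is identifying and correctly quoting the right form of the pointwise gradient potential estimate. This rests on the nonlinear Calder\'on--Zygmund theory developed by DiBenedetto, Duzaar, Mingione and Kuusi, whose sharp range of applicability is exactly the paper's hypothesis $r > 2 - \frac{1}{N}$; everything else is elementary H\"older and scaling.
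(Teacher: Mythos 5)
Your argument is essentially the paper's own proof: both apply the nonlinear potential pointwise gradient estimate on $B_R(x_0)$, use H\"older to bound the truncated Riesz potential by $R^{1-N/\zeta}\|h\|_\zeta$ and the average gradient term by $R^{-N/r'}\|\nabla z\|_r^{r-1}$, and then take $\sup_{x_0}$ and $\inf_{R>0}$. The only cosmetic differences are that the paper writes the average term as $\bigl(R^{-N}\int_{B_R}|\nabla z|\,dy\bigr)^{r-1}$ rather than $\fint_{B_R}|\nabla z|^{r-1}\,dy$ (after H\"older both yield the same $R^{-N/r'}\|\nabla z\|_r^{r-1}$), and that the paper correctly splits the citation between Kuusi--Mingione for $r\ge 2$ and Duzaar--Mingione for $2-\frac{1}{N}<r<2$, whereas you attribute the whole range to Kuusi--Mingione; for the sub-quadratic regime the reference should be \cite{DuMi}.
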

\begin{proof}
Pick any $x\in\R^N$ and $ R > 0 $. Via \cite[Theorem 1.1]{KM}, when $ r \geq 2 $, or \cite[Theorem 1.1]{DuMi}, if $2-\frac{1}{N} < r < 2 $, with $ \Omega:=B(x,R) $ and $ \mu:= h {\rm d}x $, as well as H\"older's inequality, we easily get
\begin{equation*}
\begin{split}
|\nabla z(x)|^{r-1} &\leq c\left[\int_0^R \left( \rho^{-N} \int_{B(x,\rho)} |h|\,{\rm d}y \right){\rm d}\rho
+\left( R^{-N} \int_{B(x,R)} |\nabla z|\,{\rm d}y \right)^{r-1} \right]\\
&\leq c\left(\|h\|_\zeta\int_0^R\rho^{-\frac{N}{\zeta}}{\rm d}\rho+R^{-\frac{r-1}{r}N}\|\nabla z\|_r^{r-1} \right)\\
&\leq c \left( R^{1-\frac{N}{\zeta}} \|h\|_\zeta + R^{-\frac{N}{r'}} \|\nabla z\|_r^{r-1} \right),
\end{split}
\end{equation*}
where $c>0$ does not depend on $z$, $h$, $x$, and $R$; see \cite[Remark 1.3]{DuMi}. Taking the infimum in $ R>0$ on the right and the supremum in $x \in \R^N$ on the left yields the conclusion.
\end{proof}
\section{The regularized system}
\subsection{`Freezing' the right-hand side}
Fix $ w:= (w_1,w_2) \in \mathcal{C}^1_+ $, $ \epsilon > 0 $ and define
\begin{equation*}
f_{w,\epsilon}:=f(\cdot,w_1+\epsilon,w_2,\nabla w),\quad
g_{w,\epsilon}:=g(\cdot,w_1,w_2+\epsilon,\nabla w),
\end{equation*}
where $ \nabla w := (\nabla w_1,\nabla w_2) $. We first focus on the auxiliary problem
\begin{equation}\label{aux}
\tag{$ {\rm P}^\epsilon_w $}
\left\{
\begin{alignedat}{2}
-\Delta_p u &= f_{w,\epsilon}(x) \; \; &\mbox{in} \; \; \R^N, \\
-\Delta_q v &= g_{w,\epsilon}(x)\; \; &\mbox{in} \; \; \R^N.
\end{alignedat}
\right.
\end{equation}
\begin{lemma}\label{auxsol}
If ${\rm H_1}$ holds then \eqref{aux} admits a unique solution $ (u,v) \in \mathcal{C}^{1,\alpha}_+ $.
\end{lemma}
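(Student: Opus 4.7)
The system \eqref{aux} is completely decoupled, so the plan is to treat the scalar equations $-\Delta_p u = f_{w,\epsilon}$ and $-\Delta_q v = g_{w,\epsilon}$ independently. For each of them I would (i) verify that the right-hand side defines a continuous linear functional on the relevant Beppo Levi space, (ii) invoke monotone operator theory for existence and uniqueness of a weak solution, (iii) bootstrap to $C^{1,\alpha}_{\rm loc}$ regularity via nonlinear elliptic theory, and (iv) establish strict positivity via V\'azquez's strong maximum principle.

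Since $\alpha_1\in(-1,0]$ and $w_1>0$, we have $(w_1+\epsilon)^{\alpha_1}\le\epsilon^{\alpha_1}$, so ${\rm H_1(f)}$ yields
\[
0<f_{w,\epsilon}(x)\le\hat m_1\,a_1(x)\bigl(\epsilon^{\alpha_1}w_2^{\beta_1}+|\nabla w_1|^{\gamma_1}+|\nabla w_2|^{\delta_1}\bigr).
\]
For any test $\phi\in\mathcal{D}^{1,p}_0(\R^N)$, H\"older's inequality coupled with the embedding \eqref{embedding}, with $w_2\in L^{q^*}(\R^N)$, $\nabla w_1\in L^p(\R^N)$, $\nabla w_2\in L^q(\R^N)$, and with $a_1$ placed in the interpolation spaces $L^{\sigma_{1,2}}$, $L^{\sigma_{1,3}}$, $L^{\sigma_{1,4}}$ introduced in the remark preceding Theorem~\ref{main}, gives a bound of $\int f_{w,\epsilon}|\phi|\,{\rm d}x$ by a constant (depending on $w$ and $\epsilon$) times $\|\phi\|_{1,p}$. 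I expect this to be the most delicate part of the argument, since it rests on the precise interpolation exponents encoded by $\theta_1<1-p/p^*$ in ${\rm H_1(a)}$, which is exactly what guarantees $\sigma_{1,j}\in(1,\zeta_1]$.

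Once $f_{w,\epsilon}\in\mathcal{D}^{1,p}_0(\R^N)^*$ is established, the existence and uniqueness of $u$ become routine: $-\Delta_p$ is bounded, continuous, strictly monotone and coercive on $\mathcal{D}^{1,p}_0(\R^N)$ (its norm is $\|\nabla\cdot\|_p$), so the Browder--Minty theorem supplies a unique weak solution. The same argument yields a unique $v\in\mathcal{D}^{1,q}_0(\R^N)$ for the second equation.

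For regularity, since $w\in\mathcal{C}^1_+$ the functions $w_2$, $\nabla w_1$, $\nabla w_2$ are locally bounded, and $a_1\in L^{s_p}_{\rm loc}(\R^N)$ with $s_p>p'N>N$ forces $f_{w,\epsilon}\in L^{s_p}_{\rm loc}(\R^N)$; the classical DiBenedetto--Tolksdorf--Lieberman interior $C^{1,\alpha}$ estimates for the $p$-Laplacian with right-hand side in $L^s_{\rm loc}$, $s>N$, then give $u\in C^{1,\alpha}_{\rm loc}(\R^N)$, and analogously for $v$. Finally, nonnegativity follows by testing with $u^-\in\mathcal{D}^{1,p}_0(\R^N)$, which forces $\int|\nabla u^-|^p\,{\rm d}x\le 0$, and V\'azquez's strong maximum principle upgrades $u\ge 0$ to $u>0$. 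Thus $(u,v)\in\mathcal{C}^{1,\alpha}_+$, completing the plan.
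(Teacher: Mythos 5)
Your plan follows the paper's own proof step for step: the system is decoupled, the right-hand side is shown to lie in the dual of the Beppo Levi space via the interpolation exponents $\sigma_{1,j}$ from Remark~1.3, Minty--Browder supplies existence and uniqueness, DiBenedetto-type nonlinear regularity upgrades to $C^{1,\alpha}_{\rm loc}$ since $f_{w,\epsilon}\in L^{s_p}_{\rm loc}$ with $s_p>N$, and positivity follows from testing with $u^-$ plus the strong maximum principle (the paper cites Pucci--Serrin, you cite V\'azquez; the result invoked is the same). The proposal is correct and takes essentially the same route as the paper; the only difference is that you make explicit the H\"older/interpolation bookkeeping that the paper leaves implicit at this point and carries out later in Lemma~\ref{auxunifest}.
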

\begin{proof}
Hypothesis $ {\rm H_1} $ and \eqref{embedding} guarantee that $(f_{w,\epsilon},g_{w,\epsilon})\in X^* $. Hence, by Minty-Browder's Theorem \cite[Theorem 5.16]{B}, problem \eqref{aux} possesses a unique solution $ (u,v) \in X $. Thanks to $ {\rm H_1}$ again one has
$$(f_{w,\epsilon},g_{w,\epsilon})\in L^{s_p}_{\rm loc}(\R^N) \times L^{s_q}_{\rm loc}(\R^N).$$
Thus, standard results from nonlinear regularity theory \cite[p. 830]{DB} entail $ (u,v) \in C^{1,\alpha}_{\rm loc}(\R^N)^2 $. Testing the first equation in \eqref{aux} with $ u^- $ we next obtain
\begin{equation*}
-\| \nabla u^-\|_p^p = \int_{\R^N} f_{w,\epsilon}u^- {\rm d}x \geq 0,
\end{equation*}
because $f$ is non-negative, which forces $ u \geq 0 $. Likewise, $ v \geq 0 $. The strong maximum principle \cite[Theorem 1.1.1]{PS} finally yields $ (u,v) \in X_+ $.
\end{proof}
\emph{Throughout this sub-section, $(u,v)$ will denote the solution to \eqref{aux} given by Lemma \ref{auxsol}.}
\begin{lemma}\label{auxunifest}
Let $ {\rm H_1} $ be satisfied. Then there exists $ L_\epsilon > 0 $ such that
\begin{equation*}
\begin{split}
\|\nabla u\|_p^{p-1} &\leq L_\epsilon (1 + \|\nabla w_1\|_p^{\gamma_1} + \|\nabla w_2\|_q^{\eta_1}), \\
\|\nabla v\|_q^{q-1} &\leq L_\epsilon (1 + \|\nabla w_1\|_p^{\eta_2} + \|\nabla w_2\|_q^{\delta_2}),
\end{split}
\end{equation*}
where $ \eta_1 := \max\{\beta_1,\delta_1\} $ and $\eta_2 := \max\{\alpha_2,\gamma_2\}$.
\end{lemma}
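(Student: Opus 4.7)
The plan is to test each equation of \eqref{aux} with its own solution. Since $(u,v)\in X_+=\mathcal{D}^{1,p}_0(\R^N)_+\times\mathcal{D}^{1,q}_0(\R^N)_+$, choosing $u$ as test function in the first equation yields
$$\|\nabla u\|_p^p=\int_{\R^N}f_{w,\epsilon}\, u\,{\rm d}x,$$
and similarly for $v$. The plan is then to estimate the right-hand side by splitting $f_{w,\epsilon}$ via the upper bound in $\mathrm{H_1(f)}$, then deduce the desired inequality by dividing by $\|\nabla u\|_p$.

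The singular term requires the only delicate pointwise manipulation: because $\alpha_1\in(-1,0]$ and $w_1>0$, we have $(w_1+\epsilon)^{\alpha_1}\le \epsilon^{\alpha_1}$, which kills the $w_1$-dependence of the bad term at the price of a constant that blows up as $\epsilon\to 0^+$. This is exactly the reason the constant $L_\epsilon$ is allowed to depend on $\epsilon$. Consequently,
$$f_{w,\epsilon}(x)\le \hat m_1 a_1(x)\bigl(\epsilon^{\alpha_1}w_2^{\beta_1}+|\nabla w_1|^{\gamma_1}+|\nabla w_2|^{\delta_1}\bigr).$$

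Next, to each of the three resulting integrals I would apply the three-factor Hölder inequality with exponents matched to the Sobolev target spaces: $a_1$ paired with $\sigma_{1,j}$, the $w$-factor paired with $q^*/\beta_1$, $p/\gamma_1$, or $q/\delta_1$ respectively, and $u$ paired with $p^*$. The precise exponents $\sigma_{1,2},\sigma_{1,3},\sigma_{1,4}$ are those read off from the preceding Remark (entries $t_2,t_3,t_4$), and $\mathrm{H_1(a)}$ combined with the interpolation remark guarantees $a_1\in L^{\sigma_{1,j}}(\R^N)$ for $j=2,3,4$. After applying the Sobolev embedding \eqref{embedding} to the $u$-factor and to each $w$-factor, I cancel one power of $\|\nabla u\|_p$ and obtain
$$\|\nabla u\|_p^{p-1}\le L_\epsilon\bigl(1+\|\nabla w_2\|_q^{\beta_1}+\|\nabla w_1\|_p^{\gamma_1}+\|\nabla w_2\|_q^{\delta_1}\bigr).$$

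To consolidate the two $\|\nabla w_2\|_q$ terms into the single $\eta_1$-power appearing in the statement, I would invoke the elementary bound $t^s\le 1+t^{\eta_1}$ valid for every $t\ge 0$ and $0\le s\le\eta_1$, absorbing the extra $1$'s into $L_\epsilon$. The bound for $\|\nabla v\|_q^{q-1}$ is obtained by the symmetric argument, starting from $\mathrm{H_1(g)}$ and using $(w_2+\epsilon)^{\beta_2}\le\epsilon^{\beta_2}$, the exponents $\sigma_{2,j}$ of the Remark, and the inequality $t^s\le 1+t^{\eta_2}$ for $s\in\{\alpha_2,\gamma_2\}$. The main obstacle is purely bookkeeping: checking that every Hölder exponent employed here is admissible, which is precisely what $\mathrm{H_1(a)}$ together with the interpolation remark guarantees; once this is in place the rest is a routine estimate.
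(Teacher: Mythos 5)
Your argument is correct and follows the paper's proof essentially verbatim: test the equation with the solution itself, kill the singular factor with $(w_1+\epsilon)^{\alpha_1}\le\max\{1,\epsilon^{\alpha_1}\}$, apply three-factor H\"older with the exponents $\sigma_{1,j}$ from the interpolation remark together with the Sobolev embedding, divide by $\|\nabla u\|_p$, and consolidate the two $\|\nabla w_2\|_q$-powers via $t^s\le 1+t^{\eta_1}$ for $0\le s\le\eta_1$ (this last step is the inequality \eqref{useful} in the paper). Nothing to add.
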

\begin{proof}
Test the first equation in \eqref{aux} with $ u $ and exploit $ {\rm H_1(f)} $, $ {\rm H_1(a)} $, besides \eqref{embedding}, to achieve
\begin{equation}\label{auxuest}
\begin{split}
\|\nabla u\|_p^p &= \int_{\R^N} f(\cdot,w_1+\epsilon,w_2,\nabla w_1,\nabla w_2) u {\rm d}x \\
&\leq \hat{m}_1 \int_{\R^N} a_1 [(w_1+\epsilon)^{\alpha_1} w_2^{\beta_1} + |\nabla w_1|^{\gamma_1} + |\nabla w_2|^{\delta_1}] u {\rm d}x \\
&\leq \hat{m}_1 \int_{\R^N} a_1 \max\{1,\epsilon^{\alpha_1}\} (w_2^{\beta_1} + |\nabla w_1|^{\gamma_1} + |\nabla w_2|^{\delta_1}) u {\rm d}x \\
&\leq c_\epsilon \|u\|_{p^*}(\|w_2\|_{q^*}^{\beta_1} + \|\nabla w_1\|_p^{\gamma_1} + \|\nabla w_2\|_q^{\delta_1}) \\
&\leq c_\epsilon \|\nabla u\|_p (\|\nabla w_2\|_q^{\beta_1} + \|\nabla w_1\|_p^{\gamma_1} + \|\nabla w_2\|_q^{\delta_1}) \\
&\leq L_\epsilon\|\nabla u\|_p (1 + \|\nabla w_1\|_p^{\gamma_1} + \|\nabla w_2\|_q^{\eta_1}), 
\end{split}
\end{equation}
because 
\begin{equation}\label{useful}
\Vert\nabla w_2\Vert_q^{\beta_1}+\Vert\nabla w_2\Vert_q^{\delta_1}\leq 2(1+\Vert\nabla w_2\Vert_q^{\eta_1}).\end{equation}
This shows the first inequality. The other is verified similarly.
\end{proof}
\begin{lemma}\label{auxmoser}
Under $ {\rm H_1} $, there exists $ M_\epsilon:=M_\epsilon(\|\nabla w_1\|_p, \|\nabla w_2\|_q) > 0 $ such that
\begin{equation*}
\max\{\|u\|_\infty,\|v\|_\infty\} \leq M_\epsilon(\|\nabla w_1\|_p, \|\nabla w_2\|_q).
\end{equation*}
\end{lemma}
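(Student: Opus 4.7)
The plan is to run Moser's iteration on $u$ (and, symmetrically, on $v$), feeding it an $L^\sigma(\R^N)$ estimate for the frozen source $f_{w,\epsilon}$ at some exponent $\sigma>N/p$ large enough to close the iteration in $L^\infty$.

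First, I would produce the source bound. Since $\alpha_1\in(-1,0]$ and $w_1+\epsilon\geq\epsilon$, one has $(w_1+\epsilon)^{\alpha_1}\leq\max\{1,\epsilon^{\alpha_1}\}$, so the upper bound in ${\rm H_1(f)}$ gives
$$f_{w,\epsilon}(x) \leq c_\epsilon\, a_1(x)\bigl[w_2^{\beta_1} + |\nabla w_1|^{\gamma_1} + |\nabla w_2|^{\delta_1}\bigr].$$
I would estimate each summand by Hölder's inequality, pairing $a_1\in L^{\zeta_1}(\R^N)$ (from ${\rm H_1(a)}$) with $w_2\in L^{q^*}(\R^N)$ (via the Sobolev embedding $\mathcal{D}^{1,q}_0\hookrightarrow L^{q^*}$), $|\nabla w_1|\in L^p(\R^N)$, and $|\nabla w_2|\in L^q(\R^N)$. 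The strict inequality $1/\zeta_1<1-p/p^*-\theta_1$ in ${\rm H_1(a)}$, together with the definition of $\theta_1$, is precisely what guarantees the existence of $\sigma$ with $1/\sigma\in[1/\zeta_1+\theta_1,\,p/N)$, hence $\sigma>N/p$ and
$$\|f_{w,\epsilon}\|_{L^\sigma(\R^N)} \leq C_\epsilon\bigl(1+\|\nabla w_1\|_p^{\gamma_1}+\|\nabla w_2\|_q^{\eta_1}\bigr),$$
where the last step collapses the two $w_2$-contributions via \eqref{useful}.

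With $u\in\mathcal{D}^{1,p}_0(\R^N)\hookrightarrow L^{p^*}(\R^N)$ nonnegative, solving $-\Delta_p u=f_{w,\epsilon}$ in $\R^N$ with $f_{w,\epsilon}\in L^\sigma(\R^N)$ and $\sigma>N/p$, a standard Moser scheme applies: test with (truncated) powers $u^{k(p-1)+1}$, apply Sobolev at each step, and iterate $k\to\infty$. This produces an estimate of the form
$$\|u\|_\infty \leq c\bigl(\|u\|_{p^*} + \|f_{w,\epsilon}\|_\sigma^{1/(p-1)}\bigr).$$
The Sobolev embedding bounds $\|u\|_{p^*}$ by $\|\nabla u\|_p$, which Lemma \ref{auxunifest} then controls in terms of $\|\nabla w_1\|_p$, $\|\nabla w_2\|_q$, and $\epsilon$; the source term is already estimated by the previous step. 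The reasoning for $v$ is completely symmetric, using ${\rm H_1(g)}$, ${\rm H_1(a)}$ applied to $a_2$, and $\beta_2\leq 0$; taking the maximum over the two bounds yields the desired $M_\epsilon(\|\nabla w_1\|_p,\|\nabla w_2\|_q)$.

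The main obstacle is extracting the exponent $\sigma>N/p$ for $f_{w,\epsilon}$. A blunt interpolation through the exponents $\sigma_{1,j}$ of the preceding remark lands exactly at $(p^*)'$, and since $p<N$ one has $(p^*)'<N/p$, which is too weak to close Moser's iteration in $L^\infty$. It is the margin built into ${\rm H_1(a)}$ that allows pushing $\sigma$ past the critical threshold by exploiting $a_1\in L^{\zeta_1}$ in place of the interpolated exponent.
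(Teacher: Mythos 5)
Your proposal is correct and follows the same core strategy as the paper: Moser iteration on each component, powered by the integrability margin built into $\mathrm{H_1(a)}$. The only organizational difference is that you first bundle the frozen source $f_{w,\epsilon}$ into a single $L^\sigma(\R^N)$ norm with $\sigma>N/p$ (using $a_1\in L^1\cap L^{\zeta_1}$ to interpolate and the identity $1-\tfrac{p}{p^*}=\tfrac{p}{N}$) and then quote a standard $L^\infty$-estimate, whereas the paper keeps the three-way Hölder split of $a_1$, the $w$-factors, and $u^{kp+1}$ (at the auxiliary exponent $\xi_1<p^*/p$ chosen via \eqref{xione}) explicit inside each iteration step; both routes rest on exactly the same strict inequality $\tfrac{1}{\zeta_1}+\theta_1<\tfrac{p}{N}$, and your closing remark that $(p^*)'<N/p$ is not enough correctly identifies where that margin is needed.
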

\begin{proof}
The proof can be made by adapting those of Lemmas 3.2--3.3 in \cite{MMM}. So, we will briefly focus the key-points only. Fix any $ \xi_1 \in [1,\frac{p^*}{p}) $ such that
\begin{equation}\label{xione}
\frac{1}{\zeta_1} < 1 - \frac{1}{\xi_1} - \theta_1,
\end{equation}
where $ \zeta_1 $ and $ \theta_1 $ come from $ {\rm H_1(a)} $. Set $ u_K := \min\{u,K\} $,  $K>1$, and test \eqref{aux} with $ \phi := u_K^{kp+1} $, $ k \geq 0 $. Fatou's Lemma, H\"older's inequality joined to $ {\rm H_1(a)} $, Sobolev's embedding \eqref{embedding}, and \eqref{useful} produce
\begin{equation*}\label{mosercomp}
\begin{split}
\frac{kp+1}{(k+1)^p}\,\|u\|_{(k+1)p^*}^{(k+1)p} &\leq \frac{kp+1}{(k+1)^p}\,\liminf_{K \to +\infty}
\|u_K\|_{(k+1)p^*}^{(k+1)p}  \\
&\leq c \max\{1,\epsilon^{\alpha_1}\} \int_{\R^N} a_1 (w_2^{\beta_1}+|\nabla w_1|^{\gamma_1}+|\nabla w_2|^{\delta_1}) u^{kp+1} {\rm d}x \\
&\leq c_\epsilon (\|\nabla w_2\|_q^{\beta_1} + \|\nabla w_1\|_p^{\gamma_1} + \|\nabla w_2\|_q^{\delta_1}) \|u\|_{(kp+1)\xi_1}^{kp+1} \\
&\leq c_\epsilon (1 + \|\nabla w_1\|_p^{\gamma_1} + \|\nabla w_2\|_q^{\eta_1}) \|u\|_{(kp+1)\xi_1}^{kp+1};
\end{split}
\end{equation*} 
cf. \cite[pp. 1587--1588]{MMM}. Moreover, 
\begin{equation*}
(k+1)p^* > (kp+1)\xi_1 \quad\forall\, k \geq 0
\end{equation*}
as $\xi_1 <\frac{p^*}{p}$. Hence, Moser's iteration can start, and we obtain $\|u\|_\infty\leq M_\epsilon$,  where
$$M_\epsilon:=c_\epsilon (1 + \|\nabla w_1\|_p^{\gamma_1} + \|\nabla w_2\|_q^{\eta_1})^\tau$$
for some $\tau>0$; details can be read in \cite[pp. 1588--1590]{MMM}. A similar argument applies to $ v $.
\end{proof}
\begin{lemma}\label{auxgradest}
If $ {\rm H_1} $ holds and $ \max\{\|w_i\|_\infty,\|\nabla w_i\|_\infty\} < +\infty $, $ i=1,2 $, then 
\begin{equation*}
\begin{split}
\|\nabla u\|_\infty^{p-1} &\leq N_\epsilon(\|\nabla w_1\|_p,\|\nabla w_2\|_q,\|w_2\|_\infty) (1 + \|\nabla w_1\|_\infty^{\gamma_1} + \|\nabla w_2\|_\infty^{\delta_1}), \\
\|\nabla v\|_\infty^{q-1} &\leq N_\epsilon(\|\nabla w_1\|_p,\|\nabla w_2\|_q,\|w_1\|_\infty) (1 + \|\nabla w_1\|_\infty^{\gamma_2} + \|\nabla w_2\|_\infty^{\delta_2})
\end{split}
\end{equation*}
with suitable constants $N_\epsilon(\|\nabla w_1\|_p,\|\nabla w_2\|_q,\|w_i\|_\infty)>0$, $i=1,2$.
\end{lemma}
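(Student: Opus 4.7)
The plan is to apply Lemma \ref{Mingione} to each equation of \eqref{aux}, once the right-hand sides have been shown to lie in $L^1(\R^N) \cap L^{\zeta_i}(\R^N)$ with quantitative bounds. Consider first $-\Delta_p u = f_{w,\epsilon}$. Starting from $H_1(f)$, the choice $\alpha_1 \in (-1,0]$ yields $(w_1+\epsilon)^{\alpha_1} \leq \epsilon^{\alpha_1}$, while $\beta_1 \geq 0$ together with the finiteness of $\|w_2\|_\infty$ gives $w_2^{\beta_1} \leq \|w_2\|_\infty^{\beta_1}$. Combined with the pointwise bounds on $|\nabla w_i|$, one obtains
\[
0 \leq f_{w,\epsilon}(x) \leq \hat{m}_1 a_1(x)\bigl(\epsilon^{\alpha_1}\|w_2\|_\infty^{\beta_1} + \|\nabla w_1\|_\infty^{\gamma_1} + \|\nabla w_2\|_\infty^{\delta_1}\bigr).
\]
Since $a_1 \in L^1(\R^N) \cap L^{\zeta_1}(\R^N)$ by $H_1(a)$, and $\zeta_1 > N$, this immediately produces
\[
\|f_{w,\epsilon}\|_{\zeta_1} \leq C(\epsilon, \|w_2\|_\infty)\bigl(1 + \|\nabla w_1\|_\infty^{\gamma_1} + \|\nabla w_2\|_\infty^{\delta_1}\bigr),
\]
together with an analogous bound for the $L^1$-norm.

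Next I would invoke Lemma \ref{Mingione} with $r = p$, $h = f_{w,\epsilon}$, $\zeta = \zeta_1$; the required regularity $u \in \mathcal{D}^{1,p}_0(\R^N) \cap C^1_{\rm loc}(\R^N)$ is supplied by Lemma \ref{auxsol}. No optimization in $R$ is actually needed: simply picking $R = 1$ already gives
\[
\|\nabla u\|_\infty^{p-1} \leq c\bigl(\|f_{w,\epsilon}\|_{\zeta_1} + \|\nabla u\|_p^{p-1}\bigr).
\]
The first summand is controlled by the previous step, while Lemma \ref{auxunifest} bounds the second solely in terms of $\|\nabla w_1\|_p$ and $\|\nabla w_2\|_q$. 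Collecting everything and absorbing all quantities depending on $\epsilon$, $\|\nabla w_1\|_p$, $\|\nabla w_2\|_q$, and $\|w_2\|_\infty$ into a single prefactor $N_\epsilon$ then yields the first stated inequality.

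The bound on $\|\nabla v\|_\infty^{q-1}$ follows by the same scheme applied to $-\Delta_q v = g_{w,\epsilon}$ with $\zeta = \zeta_2$, using $H_1(g)$ in place of $H_1(f)$. The only substantive difference in the pointwise estimate of $g_{w,\epsilon}$ is that now $\beta_2 \leq 0$ yields $(w_2+\epsilon)^{\beta_2} \leq \epsilon^{\beta_2}$, while $\alpha_2 \geq 0$ forces $w_1^{\alpha_2} \leq \|w_1\|_\infty^{\alpha_2}$; this is precisely why the corresponding $N_\epsilon$ depends on $\|w_1\|_\infty$ rather than $\|w_2\|_\infty$. I do not expect any serious obstacle here: the argument is essentially bookkeeping, and the only point that truly needs checking is that each contribution splits cleanly into a prefactor depending on $(\|\nabla w_1\|_p, \|\nabla w_2\|_q, \|w_i\|_\infty)$ multiplied by the bracketed factor $1 + \|\nabla w_1\|_\infty^{\gamma_i} + \|\nabla w_2\|_\infty^{\delta_i}$, which is automatic from the structural form of the upper bounds on $f_{w,\epsilon}$ and $g_{w,\epsilon}$.
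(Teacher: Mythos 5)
Your argument is correct and follows essentially the same route as the paper: bound $f_{w,\epsilon}$ pointwise via $H_1(f)$ using $\alpha_1 \leq 0$ and $\beta_1 \geq 0$, take the $L^{\zeta_1}$-norm, apply Lemma \ref{Mingione} (where the paper, like you, effectively fixes $R=1$), and control $\|\nabla u\|_p^{p-1}$ through Lemma \ref{auxunifest} before collecting the prefactor into $N_\epsilon$. The treatment of the second equation and the swap of $\|w_2\|_\infty$ for $\|w_1\|_\infty$ also match the paper.
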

\begin{proof}
Lemma \ref{auxsol} ensures that $ u \in \mathcal{D}^{1,p}_0(\R^N) \cap C^1_{{\rm loc}}(\R^N)$, while ${\rm H_1}$ entails $f_{w,\epsilon}\in L^1(\R^N)\cap L^{\zeta_1}(\R^N)$. By Lemma \ref{Mingione}, besides ${\rm H_1}$ again, we thus have
\begin{equation*}
\begin{split}
\|\nabla u\|_\infty^{p-1} &\leq c(\|f_{w,\epsilon}\|_{\zeta_1} + \|\nabla u\|_p^{p-1}) \\
&\leq c[\max\{1,\epsilon^{\alpha_1}\} \|a_1\|_{\zeta_1} (\|w_2\|_\infty^{\beta_1} + \|\nabla w_1\|_\infty^{\gamma_1} + \|\nabla w_2\|_\infty^{\delta_1}) + \|\nabla u\|_p^{p-1}]  \\
&\leq c_\epsilon (\|w_2\|_\infty^{\beta_1} + \|\nabla w_1\|_\infty^{\gamma_1} + \|\nabla w_2\|_\infty^{\delta_1} + \|\nabla u\|_p^{p-1}).
\end{split}
\end{equation*}
Now, using Lemma \ref{auxunifest} yields
\begin{equation*}
\begin{split}
\|\nabla u\|_\infty^{p-1} &\leq c_\epsilon (\|w_2\|_\infty^{\beta_1} + \|\nabla w_1\|_\infty^{\gamma_1} + \|\nabla w_2\|_\infty^{\delta_1} + \|\nabla w_1\|_p^{\gamma_1} + \|\nabla w_2\|_q^{\eta_1} + 1)\\
&\leq N_\epsilon(\|\nabla w_1\|_p,\|\nabla w_2\|_q,\|w_2\|_\infty) (1 + \|\nabla w_1\|_\infty^{\gamma_1} + \|\nabla w_2\|_\infty^{\delta_1}),
\end{split}
\end{equation*}
where
$$N_\epsilon(\|\nabla w_1\|_p,\|\nabla w_2\|_q,\|w_2\|_\infty):=c_\epsilon(1 + \|w_2\|_\infty^{\beta_1} + \|\nabla w_1\|_p^{\gamma_1} + \|\nabla w_2\|_q^{\eta_1}).$$
This shows the first inequality. The other is analogous.
\end{proof}
\subsection{Regularizing the right-hand side}
Let $ {\rm H_1} $ be satisfied. Given $\epsilon>0$, define 
\begin{equation*}
\begin{split}
\mathcal{R}_\epsilon:=\{(w_1,w_2)\in \mathcal{C}^1_+:&\|\nabla w_1\|_p \leq A_1,\,\|\nabla w_2\|_q \leq A_2,\\
& \|w_i\|_\infty \leq B_i, \, \|\nabla w_i\|_\infty \leq C_i,\, i=1,2\},
\end{split}
\end{equation*}
with $ A_i,B_i,C_i > 0 $, $ i=1,2 $, such that
\begin{equation}
\label{conds}
\left\{
\begin{array}{ll}
A_1^{p-1} &\geq L_\epsilon (1 + A_1^{\gamma_1} + A_2^{\eta_1}), \\
A_2^{q-1} &\geq L_\epsilon (1 + A_1^{\eta_2} + A_2^{\delta_2}), \\
B_1,B_2 &\geq M_\epsilon(A_1,A_2), \\
C_1^{p-1} &\geq N_\epsilon(A_1,A_2,B_2) (1 + C_1^{\gamma_1} + C_2^{\delta_1}), \\
C_2^{q-1} &\geq N_\epsilon(A_1,A_2,B_1) (1 + C_1^{\gamma_2} + C_2^{\delta_2}),
\end{array}
\right.
\end{equation}
and $ L_\epsilon $, $ M_\epsilon(\cdot,\cdot) $, $ N_\epsilon(\cdot,\cdot,\cdot) $ stemming from Lemmas \ref{auxunifest}--\ref{auxgradest}. Apropos, system \eqref{conds} admits solutions. In fact, by $ {\rm H_1} $, we can pick
\begin{equation}\label{sigma}
1 < \sigma < \frac{(p-1)(q-1)}{\eta_1 \eta_2}.
\end{equation}
If $ A_1 := K^{\frac{1}{\eta_2}} $ and $ A_2 := K^{\frac{\sigma}{q-1}} $ then the first two inequalities of \eqref{conds} become
\begin{equation*}
K^{\frac{p-1}{\eta_2}}\geq L_\epsilon (1 + K^{\frac{\gamma_1}{\eta_2}}  + K^{\frac{\sigma \eta_1}{q-1}}),\quad
K^{\sigma}\geq L_\epsilon (1 + K + K^{\frac{\sigma \delta_2}{q-1}}),
\end{equation*}
which, due to \eqref{sigma}, are true for any sufficiently large $K>0$. Next, choose
\begin{equation*}
B_1 := B_2 := M_\epsilon(K^{\frac{1}{\eta_2}},K^{\frac{\sigma}{q-1}}).
\end{equation*}
With $ A_i $, $ B_i $ as above, set $ C_1 := H^{\frac{1}{\eta_2}} $ and $ C_2 := H^{\frac{\sigma}{q-1}} $. The last two inequalities in \eqref{conds} rewrite
\begin{equation*}
\begin{split}
H^{\frac{p-1}{\eta_2}} &\geq N_\epsilon(A_1,A_2,B_2) (1 + H^{\frac{\gamma_1}{\eta_2}}  + H^{\frac{\sigma \delta_1}{q-1}}), \\
H^{\sigma} &\geq N_\epsilon(A_1,A_2,B_1) (1 + H^{\frac{\gamma_2}{\eta_2}} + H^{\frac{\sigma \delta_2}{q-1}}).
\end{split}
\end{equation*}
Thanks to \eqref{sigma} again, they hold for every $ H>0 $ big enough.

On the \textit{trapping region} $\mathcal{R}_\epsilon$ we will consider the topology induced by that of $ X $. Let us now investigate the regularized problem
\begin{equation}\label{aux2}
\tag{$ {\rm P}^\epsilon $}
\left\{
\begin{alignedat}{2}
-\Delta_p u &= f(x,u+\epsilon,v,\nabla u, \nabla v) \; \; &&\mbox{in} \; \; \R^N, \\
-\Delta_q v &= g(x,u,v+\epsilon,\nabla u, \nabla v) \; \; &&\mbox{in} \; \; \R^N, \\
u,v &> 0 \; \; &&\mbox{in} \; \; \R^N,
\end{alignedat}
\right.
\end{equation}
where $\epsilon\geq 0$. Evidently, $({\rm P}^\epsilon)$ reduces to \eqref{prob} once $\epsilon=0$.
\begin{lemma}\label{aux2sol}
Under ${\rm H_1}$, for every $ \epsilon > 0$ problem \eqref{aux2} possesses a solution $ (u_\epsilon,v_\epsilon) \in \mathcal{C}^{1,\alpha}_+ $.
\end{lemma}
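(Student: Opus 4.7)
The approach is a Schauder-type fixed point scheme on the trapping region $\mathcal{R}_\epsilon$. Define the operator $T_\epsilon\colon\mathcal{R}_\epsilon\to X$ by $T_\epsilon(w):=(u,v)$, where $(u,v)\in\mathcal{C}^{1,\alpha}_+$ is the unique solution of the frozen problem \eqref{aux} supplied by Lemma \ref{auxsol}. Any fixed point of $T_\epsilon$ in $\mathcal{R}_\epsilon$ automatically solves \eqref{aux2} and lies in $\mathcal{C}^{1,\alpha}_+$.

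The first task is to check the invariance $T_\epsilon(\mathcal{R}_\epsilon)\subseteq\mathcal{R}_\epsilon$, which is engineered into the very definition of the constants $A_i,B_i,C_i$: if $w\in\mathcal{R}_\epsilon$ and $(u,v):=T_\epsilon(w)$, then Lemma \ref{auxunifest} combined with the first two inequalities of \eqref{conds} yields $\|\nabla u\|_p\leq A_1$ and $\|\nabla v\|_q\leq A_2$; monotonicity of $M_\epsilon(\cdot,\cdot)$, the third line of \eqref{conds}, and Lemma \ref{auxmoser} produce $\max\{\|u\|_\infty,\|v\|_\infty\}\leq M_\epsilon(A_1,A_2)\leq\min\{B_1,B_2\}$; finally, Lemma \ref{auxgradest} and the last two inequalities of \eqref{conds} give $\|\nabla u\|_\infty\leq C_1$ and $\|\nabla v\|_\infty\leq C_2$. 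Positivity and $C^{1,\alpha}_{\rm loc}$-regularity are inherited directly from Lemma \ref{auxsol}.

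Next, $\mathcal{R}_\epsilon$ is a nonempty (as it contains suitable small positive bump functions), bounded, convex subset of $X$, and is closed in the $X$-topology because the $L^\infty$ and Lipschitz constraints are preserved by a.e.-extraction from $X$-convergent subsequences. For the continuity of $T_\epsilon$, take $w^{(n)}\to w$ in $X$: the uniform $L^\infty$- and $W^{1,\infty}$-bounds on $\mathcal{R}_\epsilon$ together with the growth prescribed by ${\rm H}_1$ yield, via dominated convergence and the $L^{\zeta_i}$-integrability of the weights $a_i$, convergence of the duality pairings of the right-hand sides $f_{w^{(n)},\epsilon}\to f_{w,\epsilon}$ and $g_{w^{(n)},\epsilon}\to g_{w,\epsilon}$ in $X^*$; the stability of the Minty--Browder correspondence for the $r$-Laplacian then forces $T_\epsilon(w^{(n)})\to T_\epsilon(w)$ in $X$.

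The principal difficulty is relative compactness of $T_\epsilon(\mathcal{R}_\epsilon)$ in $X$: by the nonlinear regularity underlying Lemma \ref{auxsol} one obtains uniform $C^{1,\alpha}_{\rm loc}$ estimates, whence Arzel\`a--Ascoli yields $C^1_{\rm loc}$-convergent subsequences, but the embedding \eqref{embedding} is not compact on $\R^N$, so local convergence must be promoted to strong $X$-convergence. The standard remedy is a tightness-at-infinity argument: testing the frozen equations with cut-offs supported in $\R^N\setminus B_R$ and invoking the $L^1(\R^N)$-decay of $a_1,a_2$ from ${\rm H}_1(a)$ gives $\int_{|x|>R}|\nabla u|^p\,{\rm d}x+\int_{|x|>R}|\nabla v|^q\,{\rm d}x$ uniformly small in $w\in\mathcal{R}_\epsilon$ as $R\to+\infty$. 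Combined with the local convergence, this yields strong $X$-convergence, and Schauder's theorem delivers a fixed point $(u_\epsilon,v_\epsilon)\in\mathcal{R}_\epsilon\subseteq\mathcal{C}^{1,\alpha}_+$ solving \eqref{aux2}.
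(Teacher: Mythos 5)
Your overall scheme (Schauder on the trapping region $\mathcal{R}_\epsilon$, with $T_\epsilon$ the frozen-problem solution map) is exactly the paper's, and the invariance step is correct. The continuity argument is routed a bit differently --- you push $f_{w^{(n)},\epsilon}\to f_{w,\epsilon}$ in $X^*$ and invoke stability of the monotone operator, whereas the paper passes to the limit in the weak formulation and concludes by uniqueness of the solution to \eqref{aux} --- but both work.

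The compactness step, however, has a genuine gap. Testing with a cut-off $\psi$ that vanishes on $B_R$, equals $1$ outside $B_{2R}$, and has $|\nabla\psi|\lesssim 1/R$, the cross term
\begin{equation*}
\int_{B_{2R}\setminus B_R} u^p\,|\nabla\psi|^p\,{\rm d}x
\end{equation*}
is not small uniformly in $w\in\mathcal{R}_\epsilon$: H\"older with exponents $p^*/p$ and $N/p$ gives only $\|u\|_{p^*}^p\,\|\nabla\psi\|_N^p\leq cA_1^p$, which is $O(1)$ (the $\|\nabla\psi\|_N$ norm is scale-invariant), while the alternative bound $B_1^p\,R^{N-p}$ blows up since $p<N$. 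Getting $o(1)$ would require $\|u\|_{L^{p^*}(B_{2R}\setminus B_R)}\to 0$ uniformly over the image $T_\epsilon(\mathcal{R}_\epsilon)$, which is precisely the tightness one is trying to establish --- so the argument is circular. The paper avoids proving tightness at infinity altogether: from a bounded sequence $\{T_\epsilon(w_n)\}\subseteq\mathcal{R}_\epsilon$ it extracts a weak limit $(u,v)$, uses local Rellich--Kondrakov plus a diagonal argument to get a.e.\ convergence, then writes
\begin{equation*}
\int_{\R^N}|\nabla u_n|^{p-2}\nabla u_n\,\nabla(u_n-u)\,{\rm d}x\leq c_\epsilon\int_{\R^N}a_1\,|u_n-u|\,{\rm d}x,
\end{equation*}
where $a_1|u_n-u|\leq 2B_1 a_1\in L^1(\R^N)$ dominates, so the right-hand side tends to $0$ by Lebesgue; the $({\rm S})_+$ property of $-\Delta_p$ on $\mathcal{D}^{1,p}_0(\R^N)$ then upgrades to strong $X$-convergence. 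This is where the $L^1(\R^N)$-integrability of $a_1,a_2$ is actually used, and it replaces the tail estimate you sketched.
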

\begin{proof}
Fix $\epsilon>0$ and define, provided $w\in\mathcal{R}_\epsilon$,
\begin{equation*}
T_\epsilon(w):= (u,v),\;\;\mbox{with $(u,v)$ being the unique solution to \eqref{aux}};
\end{equation*}
cf. Lemma \ref{auxsol}. From Lemmas \ref{auxunifest}--\ref{auxgradest}, besides \eqref{conds}, it follows $T_\epsilon (\mathcal{R}_\epsilon)\subseteq \mathcal{R}_\epsilon$. 
\vskip2pt
\noindent\emph{Claim 1.} $T_\epsilon (\mathcal{R}_\epsilon)$ is relatively compact in $X$.
\vskip2pt
\noindent To see this, pick $\{w_n\}\subseteq\mathcal{R}_\epsilon $, put
$$w_n:= (w_{1,n},w_{2,n}),\quad (u_n,v_n):= T_\epsilon(w_n),\quad n\in\N,$$
and understand any convergence up to sub-sequences. Since $\{T_\epsilon (w_n)\}\subseteq\mathcal{R}_\epsilon$ while $X$ is reflexive, $\{(u_n,v_n)\}$ weakly converges to a point $(u,v)\in X$. If $Y_\rho:=L^p(B_\rho) \times L^q(B_\rho)$, $\rho>0$, then by \eqref{embedding} one has
\begin{equation*}
X \hookrightarrow W^{1,p}(B_\rho) \times W^{1,q}(B_\rho)\hookrightarrow Y_\rho
\end{equation*}
and the embedding $ X \hookrightarrow Y_\rho$ is compact, due to Rellich-Kondrakov's theorem \cite[Theorem 9.16]{B}. Thus, $ (u_n,v_n) \to (u,v) $ in $ Y_\rho$. Let us next verify that
\begin{equation}\label{Cantor}
(u_n(x),v_n(x)) \to (u(x),v(x)) \;\;\mbox{for almost every} \; \; x\in\R^N.
\end{equation}
In fact, $(u_n,v_n) \to (u,v)$ in $ Y_1 $ yields a sub-sequence $ \{(u_n^{(1)},v_n^{(1)})\}$ of $ \{(u_n,v_n)\}$ such that
$$ (u_n^{(1)}(x),v_n^{(1)}(x)) \to (u(x),v(x))\;\;\mbox{for almost all}\;\; x\in B_1\, .$$
Since $ (u_n^{(1)},v_n^{(1)}) \to (u,v) $ in $ Y_2 $, we can extract a sub-sequence $ \{(u_n^{(2)},v_n^{(2)})\}$ from $ \{(u_n^{(1)},v_n^{(1)})\} $ fulfilling
$$(u_n^{(2)}(x),v_n^{(2)}(x)) \to (u(x),v(x))\;\;\mbox{for almost every}\;\; x\in B_2\, .$$
By induction, to each $ k \geq 2 $ there corresponds a sub-sequence $ \{(u_n^{(k)},v_n^{(k)})\} $ of
$ \{(u_n^{(k-1)},v_n^{(k-1)})\}$ such that 
$$ (u_n^{(k)}(x),v_n^{(k)}(x)) \to (u(x),v(x))\;\;\mbox{for almost all}\;\; x\in B_k\, .$$
Now, Cantor's diagonal procedure leads to $ (u_n^{(n)},v_n^{(n)}) \to (u,v) $ a.e. in $ \R^N $, because $ \bigcup_{k=1}^\infty B_k = \R^N $, and \eqref{Cantor} follows.\\
Through ${\rm H_1(f)}$, besides the inclusion $\{w_n\}\subseteq\mathcal{R}_\epsilon $, we get
\begin{equation}\label{S+}
\begin{split}
&\int_{\R^N} |\nabla u_n|^{p-2} \nabla u_n \nabla (u_n-u) {\rm d}x \\
&= \int_{\R^N} f(\cdot,w_{1,n}+\epsilon,w_{2,n},\nabla w_n) (u_n-u) {\rm d}x  \\
&\leq \int_{\R^N} f(\cdot,w_{1,n}+\epsilon,w_{2,n},\nabla w_n) |u_n-u| {\rm d}x \\
&\leq c_\epsilon\int_{\R^N} a_1|u_n-u| {\rm d}x\quad\forall\, n\in\N,
\end{split}
\end{equation}
with $c_\epsilon:=\hat{m}_1(\epsilon^{\alpha_1}B_2^{\beta_1}+C_1^{\gamma_1}+C_2^{\delta_1})$. Using $T_\epsilon(\mathcal{R}_\epsilon)\subseteq \mathcal{R}_\epsilon$ and \eqref{Cantor} one has
\begin{equation*}
a_1 |u_n-u| \leq 2B_1 a_1 \in L^1(\R^N), \quad n\in\N.
\end{equation*}
So, by \eqref{Cantor}--\eqref{S+}, Lebesgue's Theorem entails
\begin{equation*}
\limsup_{n\to \infty} \int_{\R^N} |\nabla u_n|^{p-2} \nabla u_n \nabla (u_n-u) {\rm d}x 
\leq c_\epsilon\lim_{n \to \infty} \int_{\R^N} a_1|u_n-u| {\rm d}x = 0.
\end{equation*}
Now, recall (cf., e.g., \cite[Proposition 2.2]{MMM}) that the operator $(-\Delta_p,\mathcal{D}^{1,p}_0(\R^N))$ is of type ${\rm (S)_+}$  to achieve $u_n\to u$ in $\mathcal{D}^{1,p}_0(\R^N)$. A similar reasoning applies to $\{v_n\} $. 
\vskip2pt
\textit{Claim 2.} $T_\epsilon:\mathcal{R}_\epsilon\to\mathcal{R}_\epsilon$ is continuous.
\vskip2pt
\noindent Let $\{w_n\}\subseteq\mathcal{R}_\epsilon$ and $w\in\mathcal{R}_\epsilon$ satisfy $w_n\to w$ in $X$. Thanks to \eqref{embedding}, Theorem 4.9 of \cite{B} provides
\begin{equation}\label{pointconvw}
w_n(x) \to w(x) \;\; \mbox{and} \;\; \nabla w_n(x) \to \nabla w(x) \;\; \mbox{for almost every} \; \; x\in\R^N.
\end{equation}
Morever, if $(u_n,v_n):= T_\epsilon(w_n)$, $n\in\N$, then there exists a point $(u,v)\in X$ such that $(u_n,v_n)\to (u,v)$ in $X$; see the proof of Claim 1. Arguing as before, we obtain
\begin{equation}\label{pointconvu}
u_n(x) \to u(x)\;\;\mbox{and}\;\; \nabla u_n(x) \to \nabla u(x) \;\; \mbox{for almost every} \; \; x\in\R^N.
\end{equation}
Since $\Vert\nabla u_n\Vert_p\leq A_1$ whatever $n$, the sequence $\{|\nabla u_n|^{p-2}\nabla u_n\}\subseteq L^{p'}(\R^N)$ turns out bounded. Due to \eqref{pointconvu} and \cite[Exercise 4.16]{B}, this yields
\begin{equation}\label{seqLHS}
\lim_{n \to \infty} \int_{\R^N} |\nabla u_n|^{p-2} \nabla u_n \nabla \phi {\rm d}x = \int_{\R^N} |\nabla u|^{p-2} \nabla u \nabla \phi {\rm d}x,\;\;\phi\in\mathcal{D}^{1,p}_0(\R^N). 
\end{equation}
On the other hand, 
\begin{equation}\label{seqRHS}
\lim_{n \to \infty}\int_{\R^N} f(\cdot,w_{1,n}+\epsilon,w_{2,n},\nabla w_n) \phi\,{\rm d}x
= \int_{\R^N} f(\cdot,w_1+\epsilon,w_2,\nabla w) \phi\,{\rm d}x
\end{equation}
by Lebesgue's Theorem jointly with \eqref{pointconvw} and the inequality
\begin{equation*}
f(\cdot,w_{1,n}+\epsilon,w_{2,n},\nabla w_n) |\phi| \leq c_\epsilon a_1 |\phi| \in L^1(\R^N)\;\;\forall\, n\in\N,
\end{equation*}
which easily arises from ${\rm H_1(f)}$ besides the choice of $\mathcal{R}_\epsilon$. Finally,
\begin{equation}\label{seq}
\int_{\R^N}|\nabla u_n|^{p-2}\nabla u_n \nabla \phi{\rm d}x
=\int_{\R^N} f(\cdot,w_{1,n}+\epsilon,w_{2,n},\nabla w_n) \phi{\rm d}x,\;\; n\in\N,
\end{equation}
because $(u_n,v_n)$ solves $({\rm P}_{w_n}^\epsilon)$. Gathering \eqref{seqLHS}--\eqref{seq} together we have
\begin{equation*}
\int_{\R^N}|\nabla u|^{p-2}\nabla u\nabla\phi\,{\rm d}x 
= \int_{\R^N} f(\cdot,w_1+\epsilon,w_2,\nabla w) \phi\,{\rm d}x\;\;\forall\phi\in\mathcal{D}^{1,p}_0(\R^N).
\end{equation*}
The same is evidently true for $ v $. So, $ (u,v) $ turns out a solution to \eqref{aux}. Uniqueness forces $ (u,v) = T_\epsilon(w) $, whence $ T_\epsilon(w_n)\to T_\epsilon(w)$. 
\vskip2pt
\noindent Now, Theorem 3.2 in \cite[p. 119]{GD} can be applied, and $T_\epsilon$ admits a fixed point $(u_\epsilon,v_\epsilon)\in\mathcal{R}_\epsilon$. By definition of $T_\epsilon$, the pair $(u_\epsilon,v_\epsilon)$ solves problem \eqref{aux2}, while Lemma \ref{auxsol} gives $(u_\epsilon,v_\epsilon)\in\mathcal{C}^{1,\alpha}_+$.
\end{proof}
\begin{lemma}\label{unifest}
If $ {\rm H_1} $--$ {\rm H_2} $ hold then there exists a constant $L> 0$, independent of $\epsilon\geq 0 $, such that $ \|(u,v)\| \leq L $ for every solution $(u,v) \in X_+$ to \eqref{aux2}.
\end{lemma}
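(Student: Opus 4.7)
The plan is to test the two weak formulations in \eqref{aux2} with $u$ and $v$ respectively, and to derive $\epsilon$-uniform bounds on $U := \|\nabla u\|_p$ and $V := \|\nabla v\|_q$. The decisive observation is the pointwise inequality $(u+\epsilon)^{\alpha_1} \leq u^{\alpha_1}$, valid because $\alpha_1 \leq 0$ and $u > 0$; combined with ${\rm H_1(f)}$ it yields
$$\|\nabla u\|_p^p \leq \hat{m}_1 \int_{\R^N} a_1 \bigl[ u^{\alpha_1+1} v^{\beta_1} + u|\nabla u|^{\gamma_1} + u|\nabla v|^{\delta_1} \bigr] {\rm d}x,$$
and analogously for $v$ through ${\rm H_1(g)}$, with no $\epsilon$-dependent factor on the right. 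This is exactly the step where the argument must depart from Lemma \ref{auxunifest}, whose constant $L_\epsilon$ originated in the factor $\max\{1, \epsilon^{\alpha_1}\}$ that blows up as $\epsilon \to 0^+$.

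Next I would estimate each of the three terms on the right via H\"older's inequality, using \eqref{embedding} to control $\|u\|_{p^*}, \|v\|_{q^*}$ by $U, V$, together with the integrability $a_1 \in L^{\sigma_{1,j}}(\R^N)$ (and similarly for $a_2$) obtained by interpolating ${\rm H_1(a)}$ as in Remark 1.2. Dividing the two resulting inequalities by $U$ and $V$ (both positive, since $u, v > 0$ a.e.) one arrives at
$$U^{p-1} \leq C \bigl( U^{\alpha_1} V^{\beta_1} + U^{\gamma_1} + V^{\delta_1} \bigr), \qquad V^{q-1} \leq C \bigl( U^{\alpha_2} V^{\beta_2} + V^{\delta_2} + U^{\gamma_2} \bigr),$$
with $C > 0$ independent of $\epsilon$. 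When $U \leq 1$ the first estimate already yields a bound; otherwise $U^{\alpha_1} \leq 1$, and dominating $V^{\beta_1} + V^{\delta_1}$ by $2(1 + V^{\eta_1})$ via \eqref{useful} and absorbing $CU^{\gamma_1}$ into $\tfrac{1}{2} U^{p-1}$ by Young's inequality (legitimate since $\gamma_1 < p-1$) gives $U^{p-1} \leq C(1 + V^{\eta_1})$. A symmetric argument, using $\delta_2 < q-1$, produces $V^{q-1} \leq C(1 + U^{\eta_2})$.

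Finally, substituting the first estimate into the second yields $V^{q-1} \leq C \bigl( 1 + V^{\eta_1 \eta_2 / (p-1)} \bigr)$, and hypothesis ${\rm H_2}$ guarantees $\eta_1 \eta_2 < (p-1)(q-1)$, so $V$, and hence also $U$, is bounded by some constant $L > 0$ independent of $\epsilon$. The main obstacle throughout is keeping every constant $\epsilon$-independent: this hinges on the pointwise bound $(u+\epsilon)^{\alpha_1} \leq u^{\alpha_1}$ and correspondingly forces one to use the delicate integrability $a_1 \in L^{\sigma_{1,1}}$ (whose exponent $t_1$ involves $\alpha_1 + 1$) rather than the cruder $a_1 \in L^{\sigma_{1,2}}$ employed in the proof of Lemma \ref{auxunifest}.
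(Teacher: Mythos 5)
Your proof is correct and follows essentially the same path as the paper's: test each equation with the solution itself, use the pointwise bound $(u+\epsilon)^{\alpha_1}\leq u^{\alpha_1}$ (valid since $\alpha_1\leq 0$) together with H\"older's inequality, Sobolev embedding \eqref{embedding}, and the interpolated integrability of $a_1,a_2$, then bootstrap the two resulting coupled inequalities. The only cosmetic difference is that you absorb the $U^{\gamma_1}$ and $V^{\delta_2}$ terms by Young's inequality, whereas the paper factors out $\max\{1,U^{\gamma_1+1}\}$ and divides by $U^{\gamma_1+1}$, arriving at the slightly sharper exponents $p-1-\gamma_1$ and $q-1-\delta_2$; both routes close under ${\rm H_2}$.
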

\begin{proof}
Pick $\epsilon\geq 0$ and suppose $(u,v)\in X_+$ solves \eqref{aux2}. As already made in showing \eqref{auxuest}, one arrives at
\begin{equation}\label{uest}
\begin{split}
\|\nabla u\|_p^p &= \int_{\R^N} f(\cdot,u+\epsilon,v,\nabla u,\nabla v) u {\rm d}x \\
&\leq \hat{m}_1 \int_{\R^N} a_1[(u+\epsilon)^{\alpha_1} v^{\beta_1} + |\nabla u|^{\gamma_1} + |\nabla v|^{\delta_1}] u {\rm d}x \\
&\leq \hat{m}_1 \int_{\R^N} a_1 (u^{\alpha_1+1} v^{\beta_1} + |\nabla u|^{\gamma_1}u + |\nabla v|^{\delta_1}u) {\rm d}x \\
&\leq c\left(\|u\|_{p^*}^{\alpha_1+1} \|v\|_{q^*}^{\beta_1} + \|\nabla u\|_p^{\gamma_1} \|u\|_{p^*} +
\|\nabla v\|_q^{\delta_1} \|u\|_{p^*}\right) \\
&\leq c\left(\|\nabla u\|_p^{\alpha_1+1} \|\nabla v\|_q^{\beta_1} + \|\nabla u\|_p^{\gamma_1+1} +
\|\nabla v\|_q^{\delta_1} \|\nabla u\|_p\right) \\
&\leq c \max\{1,\|\nabla u\|_p^{\gamma_1+1}\} \max\{1,\|\nabla v\|_q^{\eta_1}\}.
\end{split}
\end{equation}
Likewise, 
\begin{equation}\label{vest}
\|\nabla v\|_q^q \leq c \max\{1,\|\nabla v\|_q^{\delta_2+1}\} \max\{1,\|\nabla u\|_p^{\eta_2}\}.
\end{equation}
It should be noted that the constant $c$ does not depend on $ (u,v) $ and $ \epsilon $. If $\|\nabla v\|_q\leq 1$ or $\|\nabla u\|_p\leq 1$  then \eqref{uest}--\eqref{vest} directly lead to the conclusion, because $\gamma_1 +1<p$ and $\delta_2+1<q$; see ${\rm H}_1$. Hence, we may assume $\min\{\|\nabla u\|_p,\|\nabla v\|_q\}>1$. Dividing \eqref{uest}--\eqref{vest} by $ \|\nabla u\|_p^{\gamma_1+1} $ and $ \|\nabla v\|_q^{\delta_2+1} $, respectively, yields
\begin{equation*}
\|\nabla u\|_p^{p-\gamma_1-1} \leq c \|\nabla v\|_q^{\eta_1},\quad
\|\nabla v\|_q^{q-\delta_2-1} \leq c \|\nabla u\|_p^{\eta_2}.
\end{equation*}
This clearly entails
\begin{equation*}
\|\nabla u\|_p^{p-\gamma_1-1} \leq c \|\nabla u\|_p^{\frac{\eta_1 \eta_2}{q-\delta_2-1}}, \quad 
\|\nabla v\|_q^{q-\delta_2-1} \leq c \|\nabla v\|_q^{\frac{\eta_1 \eta_2}{p-\gamma_1-1}}.
\end{equation*}
The conclusion now follows from $ {\rm H_2} $.
\end{proof}
\begin{lemma}\label{moser}
Let ${\rm H_1}$--${\rm H_2}$ be satisfied. Then there exists $M > 0$, independent of $\epsilon\geq 0$, such that
\begin{equation*}
\max\{\|u\|_\infty,\|v\|_\infty\} \leq M
\end{equation*}
for every solution $ (u,v) \in X_+ $ to \eqref{aux2}.  
\end{lemma}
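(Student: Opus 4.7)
The strategy is to adapt the Moser iteration performed in Lemma \ref{auxmoser}, now with the solution $(u,v)$ of \eqref{aux2} playing the role of the frozen datum $(w_1,w_2)$. Two new ingredients make the scheme produce an $\epsilon$-independent bound. First, by Lemma \ref{unifest}, we have $\|\nabla u\|_p,\|\nabla v\|_q\leq L$ uniformly in $\epsilon\geq 0$. Second, for $u_K:=\min\{u,K\}$ with $K>1$ and any $k\geq 0$, the pointwise inequality
\[(u+\epsilon)^{\alpha_1}\,u_K^{kp+1}\leq u^{\alpha_1+kp+1}\]
holds on $\{u>0\}$: on $\{u\leq K\}$ one uses $(u+\epsilon)^{\alpha_1}\leq u^{\alpha_1}$ (from $\alpha_1\leq 0$) together with $u_K=u$; on $\{u>K\}$ one bounds $(u+\epsilon)^{\alpha_1}\leq K^{\alpha_1}$, so that the left-hand side is at most $K^{\alpha_1+kp+1}\leq u^{\alpha_1+kp+1}$, since $\alpha_1+kp+1>0$. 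This removes the bothersome factor $\max\{1,\epsilon^{\alpha_1}\}$ which spoiled uniformity in Lemma \ref{auxmoser}.

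Concretely, I would fix $\xi_1\in[1,p^*/p)$ satisfying \eqref{xione}, test the first equation in \eqref{aux2} with $\phi:=u_K^{kp+1}$, and combine $ {\rm H_1(f)} $, the pointwise bound above, the trivial estimate $u_K\leq u$, H\"older's inequality, $ {\rm H_1(a)} $ (through interpolation between $L^1$ and $L^{\zeta_1}$), Sobolev's embedding \eqref{embedding}, the inequality \eqref{useful}, and Lemma \ref{unifest} to arrive at
\[\frac{kp+1}{(k+1)^p}\,\|u\|_{(k+1)p^*}^{(k+1)p}\leq c\,\|u\|_{(kp+1)\xi_1}^{kp+1},\]
with $c>0$ independent of $\epsilon$. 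Since $(k+1)p^*>(kp+1)\xi_1$ for every $k\geq 0$, Moser's iteration (as in \cite[pp.~1588--1590]{MMM}) yields $\|u\|_\infty\leq M$ with $M$ independent of $\epsilon$; the estimate on $\|v\|_\infty$ is obtained symmetrically via $ {\rm H_1(g)} $.

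The only delicate step is the H\"older bookkeeping for the singular term $\int_{\R^N} a_1\,u^{\alpha_1+kp+1}v^{\beta_1}\,dx$: the conjugate exponent assigned to $a_1$ must lie in $[1,\zeta_1]$, uniformly in $k\geq 0$. Because $\alpha_1\leq 0$, the ratio $(\alpha_1+kp+1)/\bigl((kp+1)\xi_1\bigr)$ is bounded by $1/\xi_1$ for every $k$, so the required integrability collapses to the $k$-independent inequality $1/\zeta_1<1-1/\xi_1-\beta_1/q^*$, which follows from \eqref{xione} via $\beta_1/q^*\leq\theta_1$. Once this check is in place, the remainder of the Moser iteration is routine and parallels \cite{MMM} verbatim.
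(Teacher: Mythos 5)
Your strategy is sound and the pointwise bound $(u+\epsilon)^{\alpha_1}u_K^{kp+1}\leq u^{\alpha_1+kp+1}$ is a correct and clean device for removing the $\epsilon$-dependent factor; however, it introduces an exponent mismatch that your write-up does not address and that makes your displayed iteration inequality false as stated. After H\"older with the conjugate exponents you describe, the singular term yields
\begin{equation*}
\int_{\R^N} a_1\, u^{\alpha_1+kp+1} v^{\beta_1}\,{\rm d}x \;\leq\; \|a_1\|_{r_1(k)}\,\|v\|_{q^*}^{\beta_1}\,\|u\|_{(kp+1)\xi_1}^{\,\alpha_1+kp+1},
\end{equation*}
whose power on $\|u\|_{(kp+1)\xi_1}$ is $\alpha_1+kp+1$, not $kp+1$. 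Since $\alpha_1\leq 0$, this dominates $\|u\|_{(kp+1)\xi_1}^{kp+1}$ precisely when $\|u\|_{(kp+1)\xi_1}<1$, so you cannot simply absorb the discrepancy into the constant $c$. A patch is available: since $\alpha_1\leq 0$ and $\alpha_1+kp+1>0$, one has $u^{\alpha_1+kp+1}\leq 1+u^{kp+1}$ pointwise, which, after H\"older on each piece, produces an iteration inequality of the form $\|u\|_{(k+1)p^*}^{(k+1)p}\leq c(k+1)^{p-1}\bigl(1+\|u\|_{(kp+1)\xi_1}^{kp+1}\bigr)$ with $c$ uniform in $k$ and $\epsilon$; this is still closable (e.g.\ by iterating with $\max\{1,\|u\|_{(kp+1)\xi_1}\}$), but it is a genuine extra step that you should make explicit.

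The paper sidesteps this entirely by a different localization: it restricts the whole iteration to $\Omega_1:=\{x:u(x)\geq 1\}$ (invoking \cite[Lemma 3.2]{MMM} to justify testing on $\Omega_1$), where $(u+\epsilon)^{\alpha_1}\leq u^{\alpha_1}\leq 1$ for every $\epsilon\geq 0$. On $\Omega_1$ the singular factor drops out, the H\"older exponent attached to $a_1$ is $k$-independent, the power on $\|u\|_{(kp+1)\xi_1}$ is exactly $kp+1$, and Lemma \ref{unifest} supplies the $\epsilon$-uniform bound on the remaining factors; one concludes $\|u\|_{L^\infty(\Omega_1)}\leq M$ and hence $\|u\|_\infty\leq\max\{1,M\}$. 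So both routes use the same uniform ingredients from Lemma \ref{unifest}; the paper's localization keeps the exponents clean, whereas your pointwise inequality avoids the localization lemma but forces you to handle the shifted exponent. Either works, but your proposal as written has the gap noted above.
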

\begin{proof}
It looks like that of Lemma \ref{auxmoser}. With the same notation, fix $\epsilon\geq 0$, suppose $(u,v) \in X_+$ solves \eqref{aux2}, and define $\Omega_1 := \{x \in \R^N: \, u(x) \geq 1\}$. Moreover, given $z\in L^r(\R^N)$, $r>1$, write $\| z\|_r$ in place of $\| z\|_{L^r(\Omega_1)}$ when no confusion can arise.  Exploiting ${\rm H_1(f)}$ one has
\begin{equation*}\label{trunc}
\int_{\Omega_1} |\nabla u|^{p-2}\nabla u \nabla \phi\, {\rm d}x \leq \hat{m}_1 \int_{\Omega_1} a_1 (v^{\beta_1}+|\nabla u|^{\gamma_1}+|\nabla v|^{\delta_1}) \phi\, {\rm d}x 
\end{equation*}
for all $ \phi \in \mathcal{D}^{1,p}_0(\R^N)_+$; cf. \cite[Lemma 3.2]{MMM}. If $\phi := u_K^{kp+1}$, $k \geq 0$, then Fatou's Lemma,
H\"older's inequality combined with ${\rm H_1(a)}$, Sobolev's embedding \eqref{embedding}, and Lemma \ref{unifest} produce
\begin{equation*}
\begin{split}
\frac{kp+1}{(k+1)^p}\,\|u\|_{(k+1)p^*}^{(k+1)p} &\leq \frac{kp+1}{(k+1)^p}\,\liminf_{K \to +\infty}
\|u_K\|_{(k+1)p^*}^{(k+1)p}  \\
&\leq c \int_{\Omega_1} a_1 (v^{\beta_1}+|\nabla u|^{\gamma_1}+|\nabla v|^{\delta_1}) u^{kp+1} {\rm d}x \\
&\leq c (\|\nabla v\|_q^{\beta_1} + \|\nabla u\|_p^{\gamma_1} + \|\nabla v\|_q^{\delta_1}) \|u\|_{(kp+1)\xi_1}^{kp+1} \\
&\leq c \|u\|_{(kp+1)\xi_1}^{kp+1},
\end{split}
\end{equation*} 
where $\xi_1 \in [1,\frac{p^*}{p})$ fulfills \eqref{xione} while $c$ does not depend on $(u,v)$ and $\epsilon$. We now proceed exactly as in the proof of Lemma \ref{auxmoser}, getting $\Vert u\Vert_\infty\leq M$. The other inequality is analogous.
\end{proof}
\begin{lemma}\label{boundbelow}
Assume ${\rm H_1}$--${\rm H_2}$. Then to every $\rho >0$ there corresponds $\sigma_\rho > 0$ such that
\begin{equation}\label{inf}
\min\left\{\essinf_{B_\rho} u, \essinf_{B_\rho} v\right\} \geq \sigma_\rho
\end{equation}
for all $ (u,v) \in X_+ $ distributional solution of \eqref{aux2}, with $0\leq\epsilon\leq 1$.
\end{lemma}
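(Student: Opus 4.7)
The plan is to decouple the two super-solution inequalities and then appeal to the local positivity tool cited in the introduction.

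Fix $\rho>0$ and let $(u,v)\in X_+$ solve \eqref{aux2} with $\epsilon\in[0,1]$. First I would invoke Lemma \ref{moser} to secure a constant $M>0$, independent of both $(u,v)$ and $\epsilon$, with $\|u\|_\infty,\|v\|_\infty\le M$. Since $\alpha_1,\beta_2\in(-1,0]$ and $\epsilon\le 1$, this gives $(u+\epsilon)^{\alpha_1}\ge(M+1)^{\alpha_1}$ and $(v+\epsilon)^{\beta_2}\ge(M+1)^{\beta_2}$. Inserting these bounds into the lower estimates of ${\rm H_1(f)}$ and ${\rm H_1(g)}$ yields
\begin{equation*}
-\Delta_p u \,\ge\, m_1(M+1)^{\alpha_1}\,a_1(x)\,v^{\beta_1}, \qquad -\Delta_q v \,\ge\, m_2(M+1)^{\beta_2}\,a_2(x)\,u^{\alpha_2} \quad \text{in } \R^N,
\end{equation*}
with constants that depend neither on $(u,v)$ nor on $\epsilon$.

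Next, on the ball $B_{2\rho}$ hypotheses ${\rm H_1(f)}$--${\rm H_1(g)}$ give $\essinf_{B_{2\rho}}a_i>0$, so the coefficients in front of $v^{\beta_1}$ and $u^{\alpha_2}$ in the two inequalities above are strictly positive and depend only on $\rho$, $M$, and the structural data. This is precisely the framework of \cite[Theorem 3.1]{DM}, the local positivity principle that the introduction has explicitly earmarked for this step. Applying it to the cross-coupled pair of super-solution inequalities just derived would produce a constant $\sigma_\rho>0$, depending only on $\rho$, $M$, the $m_i$, the exponents $\alpha_i,\beta_i$, and $\essinf_{B_{2\rho}}a_i$, for which \eqref{inf} holds.

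The genuinely delicate point, and in my view the main obstacle, is the circular coupling between the two inequalities: a pointwise lower bound on $u$ apparently requires one on $v^{\beta_1}$, and conversely for $v$, so neither inequality in isolation yields positivity. The role of \cite[Theorem 3.1]{DM} is exactly to break this circularity in a quantitative, solution-independent way. If a verbatim application of that result were not available, the fallback would be to reproduce its idea by hand: alternate weak comparison against $r$-Laplace torsion functions on $B_{2\rho}$ with zero boundary data (strictly positive on $B_\rho$ by the strong maximum principle \cite[Theorem 1.1.1]{PS}) and bootstrap the resulting lower bounds back and forth between $u$ and $v$, using at each round the uniform constants supplied by Lemma \ref{moser} and the structural constants $(M+1)^{\alpha_1}$, $(M+1)^{\beta_2}$, $\essinf_{B_{2\rho}}a_i$ obtained above.
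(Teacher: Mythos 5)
Your structural set-up is correct and matches the paper's: invoke Lemma~\ref{moser} for the uniform bound $M$, observe that $\alpha_1,\beta_2\le 0$ and $\epsilon\le 1$ give $(u+\epsilon)^{\alpha_1}\ge(M+1)^{\alpha_1}$, $(v+\epsilon)^{\beta_2}\ge(M+1)^{\beta_2}$, factor out $\essinf a_i$ on the ball, and arrive at the pair of differential inequalities
\begin{equation*}
-\Delta_p u \ge c_\rho\, v^{\beta_1}, \qquad -\Delta_q v \ge c_\rho\, u^{\alpha_2}\qquad\text{a.e. in } B_\rho.
\end{equation*}
But there is a genuine gap in what follows. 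You correctly identify the circular coupling as the heart of the matter, and then delegate its resolution wholesale to \cite[Theorem 3.1]{DM}, writing that its role is ``exactly to break this circularity.'' It is not: that theorem is a weak-Harnack (local positivity) estimate for a \emph{single} quasilinear inequality $-\Delta_r z\ge h\ge 0$, giving $(\essinf_{B_\rho}z)^{r-1}\ge c_\rho\,|B_\rho|^{-1}\int_{B_\rho}h\,{\rm d}x$. Applying it to each equation separately yields only
\begin{equation*}
\bigl(\essinf_{B_\rho} u\bigr)^{p-1}\ge c_\rho\bigl(\essinf_{B_\rho} v\bigr)^{\beta_1}, \qquad
\bigl(\essinf_{B_\rho} v\bigr)^{q-1}\ge c_\rho\bigl(\essinf_{B_\rho} u\bigr)^{\alpha_2},
\end{equation*}
where the passage from the mean to the essential infimum uses $\beta_1,\alpha_2\ge0$ so that $t\mapsto t^{\beta_1}$ and $t\mapsto t^{\alpha_2}$ are non-decreasing. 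The circularity is still present at this point; nothing in \cite{DM} removes it.

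What actually breaks it is a purely algebraic closure that you omit. Substituting one inequality into the other gives, for $t:=\essinf_{B_\rho}u>0$ (positivity is part of $(u,v)\in X_+$), a self-improving bound of the form $t^{(p-1)(q-1)}\ge c_\rho\,t^{\alpha_2\beta_1}$, and the strict inequality $\alpha_2\beta_1<(p-1)(q-1)$ — which follows already from ${\rm H_1}$, since $\alpha_2\in[0,p-1)$ and $\beta_1\in[0,q-1)$ — is precisely what turns this into a lower bound $t\ge\sigma_\rho>0$ independent of $(u,v)$ and $\epsilon$; likewise for $\essinf_{B_\rho}v$. Without explicitly invoking this sub-homogeneity and carrying out the substitution, the proof is incomplete: the estimate could degenerate as $\essinf u\to0^+$. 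Your proposed ``bootstrap'' fallback does not obviously converge for the same reason — each iteration could lose a fixed fraction of the constant unless the exponent relation $\alpha_2\beta_1<(p-1)(q-1)$ is used to show the iterates stabilize, which again is the missing ingredient.
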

\begin{proof}
Fix $\rho> 0$. Conditions ${\rm H_1(f)}$--${\rm H_1(g)}$, besides Lemma \ref{moser}, entail 
\begin{equation*}
\begin{split}
f(\cdot,u+\epsilon,v,\nabla u, \nabla v) &\geq m_1\left(\essinf_{B_\rho} a_1\right)(M+1)^{\alpha_1} v^{\beta_1}, \\
g(\cdot,u,v+\epsilon,\nabla u, \nabla v) &\geq m_2\left(\essinf_{B_\rho} a_2\right)(M+1)^{\beta_2} u^{\alpha_2}
\end{split}
\end{equation*}
a.e. in $B_\rho$. From \cite[Theorem 3.1]{DM} it thus follows
\begin{equation*}
\begin{split}
\left(\essinf_{B_\rho} u \right)^{p-1} &\geq \frac{c_\rho}{|B_\rho|} \int_{B_\rho} v^{\beta_1} {\rm d}x \geq c_\rho \left(\essinf_{B_\rho} v \right)^{\beta_1}, \\
\left(\essinf_{B_\rho} v \right)^{q-1} &\geq \frac{c_\rho}{|B_\rho|} \int_{B_\rho} u^{\alpha_2} {\rm d}x \geq c_\rho \left(\essinf_{B_\rho} u \right)^{\alpha_2},
\end{split}
\end{equation*}
which easily give
\begin{equation*}
\essinf_{B_\rho} u \leq c_\rho\left(\essinf_{B_\rho} u \right)^{\frac{(p-1)(q-1)}{\alpha_2 \beta_1}},\quad
\essinf_{B_\rho} v \leq c_\rho\left(\essinf_{B_\rho} v \right)^{\frac{(p-1)(q-1)}{\alpha_2 \beta_1}}.
\end{equation*}
Now, \eqref{inf} is a simple consequence of ${\rm H_1}$, because $\alpha_2\beta_1<(p-1)(q-1)$. 
\end{proof}
\section{Proof of the main result}
\begin{lemma}\label{distrsol}
Under ${\rm H_1}$--${\rm H_3}$, problem \eqref{prob} possesses a distributional solution $(u,v) \in X_+$.
\end{lemma}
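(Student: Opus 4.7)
The plan is to approximate \eqref{prob} via the regularized problems \eqref{aux2} with $\epsilon_n \downarrow 0^+$, extract a weak limit, and then pass to the limit in the distributional formulation. Lemma \ref{aux2sol} yields a solution $(u_n, v_n) \in \mathcal{C}^{1,\alpha}_+$ of $({\rm P}^{\epsilon_n})$ for each $n$, and Lemma \ref{unifest} furnishes a uniform bound $\|(u_n, v_n)\| \leq L$. By reflexivity of $X$ we pass to a subsequence with $(u_n, v_n) \rightharpoonup (u, v)$ in $X$; local compactness of the Sobolev embedding combined with a Cantor-diagonal argument, exactly as in the proof of Lemma \ref{aux2sol}, Claim~1, yields $(u_n, v_n) \to (u, v)$ a.e.\ in $\R^N$. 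Applying Lemma \ref{boundbelow} (for $n$ large, so that $\epsilon_n \leq 1$) one has $\essinf_{B_\rho} u_n, \essinf_{B_\rho} v_n \geq \sigma_\rho$, whence passing to the limit gives $u, v \geq \sigma_\rho > 0$ a.e.\ in $B_\rho$ for every $\rho > 0$, so $(u, v) \in X_+$.

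The main obstacle is obtaining a.e.\ convergence of the gradients, which is indispensable for passing to the limit in the right-hand side of the distributional identity. I would test the weak form of $({\rm P}^{\epsilon_n})$ with $\eta (u_n - u)$, where $\eta \in C^\infty_0(B_{2\rho})$ is a cutoff equal to $1$ on $B_\rho$. This rewrites as
\begin{equation*}
\int_{\R^N} \eta |\nabla u_n|^{p-2} \nabla u_n \cdot \nabla(u_n - u) \, {\rm d}x = \int_{\R^N} f_n \eta (u_n - u) \, {\rm d}x - \int_{\R^N} (u_n - u) |\nabla u_n|^{p-2} \nabla u_n \cdot \nabla \eta \, {\rm d}x,
\end{equation*}
with $f_n := f(\cdot, u_n + \epsilon_n, v_n, \nabla u_n, \nabla v_n)$. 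The last term vanishes as $n \to \infty$, thanks to $u_n \to u$ in $L^p(B_{2\rho})$ by Rellich and the uniform $L^{p'}$-bound on $|\nabla u_n|^{p-2} \nabla u_n$. For the remaining right-hand side, Lemma \ref{boundbelow} controls the singular factor by $(u_n + \epsilon_n)^{\alpha_1} \leq \sigma_{2\rho}^{\alpha_1}$ (valid since $\alpha_1 \leq 0$), while ${\rm H_3}$ is precisely tuned so that $a_1 |\nabla u_n|^{\gamma_1}$ and $a_1 |\nabla v_n|^{\delta_1}$ lie in $L^2(B_{2\rho})$ uniformly in $n$: H\"older's inequality with $\frac{1}{s_p} + \frac{\gamma_1}{p} \leq \frac{1}{2}$ combined with the gradient bound from Lemma \ref{unifest} does the job. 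Pairing with $u_n - u \to 0$ in $L^2(B_{2\rho})$ drives the whole integral to zero, and a localized monotonicity/$(S)_+$ argument for the $p$-Laplacian (cf.\ \cite[Proposition 2.2]{MMM}) then gives $\nabla u_n \to \nabla u$ a.e.\ in $B_\rho$. An analogous argument handles $v_n$, and diagonalization in $\rho$ upgrades the convergence to a.e.\ in $\R^N$.

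Once $(u_n, v_n, \nabla u_n, \nabla v_n) \to (u, v, \nabla u, \nabla v)$ a.e.\ in $\R^N$, passing to the limit in the distributional identity is routine. For $\phi_1 \in C^\infty_0(\R^N)$, the left-hand side converges by \cite[Exercise 4.16]{B}, since $|\nabla u_n|^{p-2} \nabla u_n$ is bounded in $L^{p'}(\R^N)$ and converges pointwise to $|\nabla u|^{p-2} \nabla u$. The right-hand side converges by Vitali's theorem: on $\supp \phi_1 \subseteq B_\rho$ the lower bound $u_n \geq \sigma_\rho$ keeps $(u_n + \epsilon_n)^{\alpha_1}$ dominated by $\sigma_\rho^{\alpha_1}$, whereas the $L^2$-equi-integrability of $a_1 |\nabla u_n|^{\gamma_1}$ and $a_1 |\nabla v_n|^{\delta_1}$ secured by ${\rm H_3}$ takes care of the gradient terms. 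The same scheme applies to the second equation, yielding the desired distributional solution.
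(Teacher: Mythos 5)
Your proposal is correct, but it reaches the crucial a.e.\ gradient convergence by a genuinely different mechanism than the paper. The paper's proof uses ${\rm H_3}$ to show that the frozen right-hand sides lie in $L^2(B_{2\rho})$ uniformly (see \eqref{estonball}), invokes the Cianchi--Maz'ya second-order estimate \cite[Theorem~2.1]{CM} together with Lemma~\ref{unifest} to bound $\{|\nabla u_n|^{p-2}\nabla u_n\}$ in $W^{1,2}(B_\rho)$, and extracts strong $L^{p'}(B_\rho)$ convergence by Rellich--Kondrakov; this, paired with weak $L^p$ convergence of the gradients, yields $\int_{B_\rho}|\nabla u_n|^{p-2}\nabla u_n\nabla(u_n-u)\,{\rm d}x\to 0$ and then $u_n\to u$ strongly in $W^{1,p}(B_\rho)$ by the $({\rm S})_+$ property. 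You instead test with the cutoff $\eta(u_n-u)\in W^{1,p}_0(B_{2\rho})\subseteq\mathcal{D}^{1,p}_0(\R^N)$, and use only the uniform $L^2(B_{2\rho})$ bound on $f_n$ (still ${\rm H_3}$) and Rellich in $L^2$ to send both right-hand contributions to zero; a localized monotonicity argument, namely subtracting $\int\eta|\nabla u|^{p-2}\nabla u\,\nabla(u_n-u)\,{\rm d}x\to 0$ by weak convergence, then gives $\int_{B_\rho}\bigl(|\nabla u_n|^{p-2}\nabla u_n-|\nabla u|^{p-2}\nabla u\bigr)\cdot\nabla(u_n-u)\,{\rm d}x\to 0$ with a nonnegative integrand and hence a.e.\ convergence of $\nabla u_n$. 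This bypasses the Cianchi--Maz'ya regularity result entirely, at the cost of producing only a.e.\ gradient convergence rather than strong local $W^{1,p}\times W^{1,q}$ convergence; accordingly, in the final limit passage you use Vitali's theorem with the uniform $L^2$ bound supplying uniform integrability, whereas the paper uses Lebesgue's theorem with pointwise dominants extracted from the strong $W^{1,p}\times W^{1,q}$ convergence via \cite[Theorem~4.9]{B}. Your route is more elementary and self-contained; the paper's buys a stronger local convergence and keeps the machinery aligned with what is reused in Lemma~\ref{strongsol}. One minor gap worth filling: the step ``a localized monotonicity/$({\rm S})_+$ argument then gives $\nabla u_n\to\nabla u$ a.e.'' needs the explicit subtraction of the term $\int\eta|\nabla u|^{p-2}\nabla u\,\nabla(u_n-u)\,{\rm d}x$ (which vanishes by weak convergence) before the pointwise monotonicity inequality can be invoked; as written you have only $\int\eta|\nabla u_n|^{p-2}\nabla u_n\nabla(u_n-u)\to 0$, which by itself does not have a sign.
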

\begin{proof}
Let $\epsilon_n:=\frac{1}{n}$, $n \in \N $. Lemma \ref{aux2sol} furnishes a sequence $\{(u_n,v_n)\}\subseteq \mathcal{C}^1_+$ such that $(u_n,v_n)$ solves $({\rm P}^{\epsilon_n})$ for all $n\in\N$. Since $X$ is reflexive, by Lemma \ref{unifest} one has $(u_n,v_n)\rightharpoonup (u,v)$ in $X$, where a sub-sequence is considered when necessary. As before (cf. the proof of Lemma \ref{aux2sol}), this forces \eqref{Cantor}. Moreover, $(u,v) \in X_+ $ because, thanks to Lemma \ref{boundbelow}, to each $\rho> 0 $ there corresponds $\sigma_\rho > 0$ satisfying
\begin{equation}\label{infn}
\min\left\{\inf_{B_\rho} u_n,\;\;\inf_{B_\rho} v_n\right\}\geq \sigma_\rho\;\;\forall\, n\in\N.
\end{equation}
\noindent\emph{Claim.} For every $\rho>0$, and along a sub-sequence if necessary, one has
\begin{equation}\label{cmgrad}
(u_n,v_n) \to (u, v) \;\; \mbox{in}\;\; W^{1,p}(B_\rho)\times W^{1,q}(B_\rho),
\end{equation}
Likewise the proof of \eqref{pointconvw}, this will force
\begin{equation}\label{cpgrad}
(\nabla u_n,\nabla v_n)\to (\nabla u,\nabla v)\;\;\mbox{a.e. in}\;\; \R^N.
\end{equation}
Let $\rho>0$. Hypothesis $ {\rm H_1} $, \eqref{infn}, Lemma \ref{moser}, and $ {\rm H_3} $ yield
\begin{equation}\label{estonball}
\begin{split}
&f(\cdot,u_n+1/n,v_n,\nabla u_n,\nabla v_n) \\
&\leq \hat{m}_1 a_1\left[(u_n+1/n)^{\alpha_1}v_n^{\beta_1}+|\nabla u_n|^{\gamma_1}
+|\nabla v_n|^{\delta_1}\right] \\
&\leq \hat{m}_1\left(\sigma_{2\rho}^{\alpha_1}M^{\beta_1}+|\nabla u_n|^{\gamma_1}+|\nabla v_n|^{\delta_1}\right) a_1 \in L^2(B_{2\rho})
\end{split}
\quad\mbox{ in }B_{2\rho}
\end{equation}
whatever $n$. So, \cite[Theorem 2.1]{CM} combined with Lemma \ref{unifest} ensure that $\{|\nabla u_n|^{p-2}\nabla u_n\}$ turns out bounded in $W^{1,2}(B_\rho)$. Since $p>2-\frac{1}{N}$, by Rellich-Kondrakov's theorem \cite[Theorem 9.16]{B}, the embedding $W^{1,2}(B_\rho)\hookrightarrow L^{p'}(B_\rho)$ is compact. Thus, up to sub-sequences,
\begin{equation}\label{strongconv}
|\nabla u_n|^{p-2} \nabla u_n \to U\;\;\mbox{in} \; \; L^{p'}(B_\rho).
\end{equation}
Next, observe that the linear operator
\begin{equation*}
z\in\mathcal{D}^{1,p}_0(\R^N)\mapsto\nabla z\lfloor_{B\rho}\in L^p(B_\rho)
\end{equation*}
turns out well-defined and strongly continuous. Therefore,
\begin{equation}\label{weakconv}
\nabla u_n \rightharpoonup \nabla u\;\;\mbox{in} \; \; L^p(B_\rho);
\end{equation}
cf. \cite[Theorem 3.10]{B}. Gathering \cite[Proposition 3.5]{B} and \eqref{strongconv}--\eqref{weakconv} together gives
\begin{equation*}
\lim_{n \to \infty} \int_{B_\rho} |\nabla u_n|^{p-2} \nabla u_n \nabla (u_n-u) {\rm d}x = 0.
\end{equation*}
Since $(-\Delta_p,W^{1,p}(B_\rho))$ enjoys the ${\rm (S)_+}$-property, we easily achieve $u_n \to u$ in $W^{1,p}(B_\rho)$. A similar conclusion holds for $ \{v_n\} $, which shows \eqref{cmgrad}. 
\vskip2pt
Now, to verify that $(u,v)$ is a distributional solution of \eqref{prob}, pick any $(\phi_1,\phi_2)\in C^\infty_0(\R^N)^2$ and choose $\rho>0$ fulfilling
$$\supp\phi_1\cup\supp\phi_2\subseteq B_\rho.$$
By \eqref{cmgrad}, \cite[Theorem 4.9]{B} furnishes $(h,k)\in L^p(B_\rho)\times L^q(B_\rho)$ such that
\begin{equation*}\label{convLeb}
|\nabla u_n| \leq h, \quad |\nabla v_n| \leq k\quad\mbox{a.e. in $B_\rho$ and for all $n\in\N$,}
\end{equation*}
whence
\begin{equation*}
f(\cdot,u_n+1/n,v_n,\nabla u_n,\nabla v_n)|\phi_1| 
\leq c_\rho (1+h^{\gamma_1}+k^{\delta_1})a_1|\phi_1|\in L^1(\R^N),\; n\in\N,
\end{equation*}
through \eqref{estonball}. So, thanks to \eqref{Cantor} and \eqref{cpgrad}, Lebesgue's Theorem entails
\begin{equation*}
\lim_{n \to \infty}\int_{\R^N} f(\cdot,u_n+1/n,v_n,\nabla u_n,\nabla v_n)\phi_1{\rm d}x
=\int_{\R^N} f(\cdot,u,v,\nabla u,\nabla v)\phi_1 {\rm d}x.
\end{equation*}
On account of \eqref{strongconv} and \eqref{cpgrad}, we then get
\begin{equation*}
\lim_{n \to \infty}\int_{\R^N} |\nabla u_n|^{p-2} \nabla u_n \nabla\phi_1{\rm d}x
=\int_{\R^N} |\nabla u|^{p-2} \nabla u \nabla\phi_1{\rm d}x.
\end{equation*}
Recalling that each $(u_n,v_n)$ weakly solves $({\rm P}^{\epsilon_n})$ produces
$$\int_{\R^N} |\nabla u|^{p-2} \nabla u \nabla\phi_1{\rm d}x
=\int_{\R^N} f(\cdot,u,v,\nabla u,\nabla v)\phi_1 {\rm d}x.$$
Likewise, 
$$\int_{\R^N} |\nabla v|^{p-2} \nabla v \nabla\phi_2{\rm d}x
=\int_{\R^N} g(\cdot,u,v,\nabla u,\nabla v)\phi_2 {\rm d}x,$$
and the assertion follows.
\end{proof}
\begin{lemma}\label{weaksol}
Let ${\rm H_1}$--${\rm H_2}$ be satisfied and let $(u,v) \in X_+$ be a distributional solution to problem \eqref{prob}. Then $(u,v)$ weakly solves \eqref{prob}.
\end{lemma}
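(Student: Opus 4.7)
The lemma is a density statement: one must extend the two distributional identities in \eqref{intbyparts} from test functions in $C^\infty_0(\R^N)$ to test functions in $\mathcal{D}^{1,p}_0(\R^N)$ and $\mathcal{D}^{1,q}_0(\R^N)$, respectively. My plan is to fix $(\phi_1,\phi_2)\in X$, pick approximants $(\phi_{1,n},\phi_{2,n})\in C^\infty_0(\R^N)^2$ with $(\phi_{1,n},\phi_{2,n})\to(\phi_1,\phi_2)$ in $X$---possible by the very definition of $\mathcal{D}^{1,r}_0(\R^N)$ as the closure of $C^\infty_0(\R^N)$ under $\|\cdot\|_{1,r}$---and pass to the limit in each equation separately; by symmetry I only discuss the first one. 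The left-hand side is handled immediately by H\"older,
$$\Bigl|\int_{\R^N}|\nabla u|^{p-2}\nabla u\cdot\nabla(\phi_{1,n}-\phi_1)\,{\rm d}x\Bigr|\le\|\nabla u\|_p^{p-1}\|\nabla(\phi_{1,n}-\phi_1)\|_p\to 0,$$
so everything reduces to showing that the functional $\Phi_1:\phi\mapsto\int_{\R^N}f(\cdot,u,v,\nabla u,\nabla v)\phi\,{\rm d}x$ extends continuously from $C^\infty_0(\R^N)$ to $\mathcal{D}^{1,p}_0(\R^N)$.

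Using the upper bound in ${\rm H_1(f)}$, one has $|f\phi|\le\hat m_1 a_1(u^{\alpha_1}v^{\beta_1}+|\nabla u|^{\gamma_1}+|\nabla v|^{\delta_1})|\phi|$, and I split the three contributions. The gradient summands fall directly to H\"older and Sobolev's embedding \eqref{embedding}, combined with the interpolation integrabilities $a_1\in L^{\sigma_{1,3}}\cap L^{\sigma_{1,4}}$ (from the remark after ${\rm H_1(a)}$): since $1/\sigma_{1,3}+\gamma_1/p+1/p^*=1$ and $1/\sigma_{1,4}+\delta_1/q+1/p^*=1$,
$$\int_{\R^N}a_1|\nabla u|^{\gamma_1}|\phi|\,{\rm d}x\le\|a_1\|_{\sigma_{1,3}}\|\nabla u\|_p^{\gamma_1}\|\phi\|_{p^*}\le c\,\|\nabla\phi\|_p,$$
and the same bound holds for $\int_{\R^N}a_1|\nabla v|^{\delta_1}|\phi|\,{\rm d}x$.

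The singular contribution $\int_{\R^N}a_1 u^{\alpha_1}v^{\beta_1}|\phi|\,{\rm d}x$ is the main obstacle, because $\alpha_1\in(-1,0]$ rules out any global pointwise upper bound on $u^{\alpha_1}$; I would bootstrap off the distributional identity itself through the \emph{lower} bound in ${\rm H_1(f)}$. Namely, for every non-negative $\psi\in C^\infty_0(\R^N)$,
$$m_1\int_{\R^N}a_1u^{\alpha_1}v^{\beta_1}\psi\,{\rm d}x\le\int_{\R^N}f\psi\,{\rm d}x=\int_{\R^N}|\nabla u|^{p-2}\nabla u\cdot\nabla\psi\,{\rm d}x\le\|\nabla u\|_p^{p-1}\|\nabla\psi\|_p.$$
Any non-negative $\phi\in\mathcal{D}^{1,p}_0(\R^N)$ is a $\mathcal{D}^{1,p}_0$-limit of non-negative $\psi_n\in C^\infty_0(\R^N)$ (standard density), so Fatou's Lemma applied to the non-negative integrand $a_1u^{\alpha_1}v^{\beta_1}\psi_n$ transfers the bound to $\phi$; using it for $|\phi|\in\mathcal{D}^{1,p}_0$ (with $\|\nabla|\phi|\|_p=\|\nabla\phi\|_p$) extends it to arbitrary $\phi\in\mathcal{D}^{1,p}_0(\R^N)$. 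Summing the three estimates yields
$$|\Phi_1(\phi)|\le\int_{\R^N}f|\phi|\,{\rm d}x\le c\,\|\nabla\phi\|_p,$$
so $\Phi_1$ is continuous on $\mathcal{D}^{1,p}_0(\R^N)$; continuity and density then promote the distributional identity for the $\phi_{1,n}$ to the weak identity for $\phi_1$. The symmetric argument, based on ${\rm H_1(g)}$ and the $\sigma_{2,\cdot}$-integrabilities, completes the proof for the second equation.
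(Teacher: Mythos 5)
Your proof is correct, and it takes a genuinely different route from the paper's. The paper extends the distributional identity from $C^\infty_0(\R^N)$ to $\mathcal D^{1,p}_0(\R^N)$ by a localization--regularization scheme: it splits $\phi=\phi^+-\phi^-$, truncates $\phi^+$ by a cutoff $\theta_n$, mollifies to obtain compactly supported smooth test functions $\psi_{k,n}$, and controls the singular weight $a_1u^{\alpha_1}v^{\beta_1}$ \emph{pointwise} on $B_{2n+2}$ via the uniform $L^\infty$-bound of Lemma~\ref{moser} (for $v^{\beta_1}$) and the local lower bound $\essinf_{B_{2n+2}}u\geq\sigma_{2n+2}>0$ of Lemma~\ref{boundbelow} (for $u^{\alpha_1}$); it then passes to the limit in the mollification index $k$ and, by monotone convergence, in the cutoff index $n$. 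You instead bootstrap: the non-negativity of $f$ plus the distributional identity already gives
\begin{equation*}
\int_{\R^N}f\psi\,{\rm d}x=\int_{\R^N}|\nabla u|^{p-2}\nabla u\cdot\nabla\psi\,{\rm d}x\le\|\nabla u\|_p^{p-1}\|\nabla\psi\|_p
\end{equation*}
for every non-negative $\psi\in C^\infty_0(\R^N)$, and the lower bound in ${\rm H_1(f)}$ then transfers this control to the singular weight $a_1u^{\alpha_1}v^{\beta_1}$, with no pointwise information on $u$ near its infimum required. This is slicker, and it never invokes Lemma~\ref{moser} or Lemma~\ref{boundbelow}; in particular your argument actually proves the statement under ${\rm H_1}$ alone. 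The one place you should spell things out is the pair of ``standard'' facts you lean on --- density of non-negative $C^\infty_0(\R^N)$ functions in the positive cone of $\mathcal D^{1,p}_0(\R^N)$, and $|\phi|\in\mathcal D^{1,p}_0(\R^N)$ with $\|\nabla|\phi|\|_p=\|\nabla\phi\|_p$ --- since in the homogeneous Sobolev setting over all of $\R^N$ these require a brief argument (continuity of $z\mapsto z^+$ plus cutoff-and-mollify), and that argument is precisely what the paper performs explicitly in \eqref{gradconv}--\eqref{auxil2}. So the two routes are complementary: the paper's localization step is, in disguise, a proof of the density fact your approach quietly assumes.
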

\begin{proof}
We evidently have, for any $\phi\in\mathcal{D}^{1,p}_0(\R^N)$,
\begin{equation}\label{splitting}
\phi= \phi^+- \phi^-.
\end{equation}
Due to the nature of $\phi^+$, a localization-regularization procedure will be necessary. With this aim, fix $\theta \in C^\infty([0,+\infty))$ such that
\begin{equation}\label{theta}
\theta(t)=\left\{
\begin{array}{ll}
1\;\; &\mbox{if} \; \; 0 \leq t \leq 1, \\
0\;\; &\mbox{when} \; \; t \geq 2,
\end{array}
\right. \quad \theta\;\;\mbox{is decreasing in}\;(1,2)
\end{equation}
and a sequence $\{\rho_k\}\subseteq C^\infty_0(\R^N)$ of standard mollifiers \cite[p. 108]{B}. Define, for every $n,k\in\N$, 
$$\theta_n(\cdot) := \theta(|\cdot|/n)\in C^\infty_0(\R^N),\quad\phi_n := \theta_n\,\phi^+ \in\mathcal{D}^{1,p}_0(\R^N),$$
$$\psi_{k,n} := \rho_k * \phi_n \in C^\infty_0(\R^N).$$
Using \eqref{theta} we easily get $\phi_n \uparrow \phi^+$. Moreover, $\displaystyle{\lim_{k\to\infty}}\psi_{k,n}= \phi_n$ in $\mathcal{D}^{1,p}_0(\R^N)$, which entails
\begin{equation}\label{convkleft}
\lim_{k \to \infty}\int_{\R^N} |\nabla u|^{p-2}\nabla u\nabla \psi_{k,n} {\rm d}x =
\int_{\R^N} |\nabla u|^{p-2}\nabla u \nabla \phi_n {\rm d}x,\quad n\in\N.
\end{equation}
If, to shorten notation, $\hat{f}:= f(\cdot,u,v,\nabla u, \nabla v)$ then the linear functional
$$\psi\in\mathcal{D}^{1,p}_0(\R^N)\mapsto\int_{B_{2n+2}}\hat{f} \psi\,{\rm d}x$$
turns out continuous. In fact, Lemmas \ref{moser}--\ref{boundbelow},  H\"older's inequality combined with ${\rm H_1(a)}$, and \eqref{embedding} produce
\begin{equation*}
\int_{B_{2n+2}} a_1 u^{\alpha_1} v^{\beta_1} |\psi| {\rm d}x \\
\leq\sigma_{2n+2}^{\alpha_1} M^{\beta_1} \|a_1\|_{(p^*)'} \|\psi\|_{p^*} \leq c_n \|\nabla \psi\|_p.
\end{equation*}
Now, the assertion follows from ${\rm H_1(f)}$, because convection terms can be estimated as already made in \eqref{uest}.\\
Observe next that
\begin{equation*}\label{support}
\supp\psi_{k,n}\subseteq \overline{\supp \rho_k +\supp\phi_n} \subseteq\overline{B_1 + B_{2n}} \subseteq B_{2n+2}\quad\forall\, n,k\in\N;
\end{equation*}
see \cite[Proposition 4.18]{B}. Hence,
\begin{equation}\label{convkright}
\begin{split}
\lim_{k \to \infty}\int_{\R^N} \hat{f} \psi_{k,n} {\rm d}x &=\lim_{k \to \infty} \int_{B_{2n+2}} \hat{f} \psi_{k,n} {\rm d}x\\
&=\int_{B_{2n+2}} \hat{f} \phi_n {\rm d}x = \int_{\R^N} \hat{f} \phi_n {\rm d}x.
\end{split}
\end{equation}
On the other hand, the hypothesis $(u,v) \in X_+$ distributional solution to \eqref{prob} evidently forces
\begin{equation*}\label{idkn}
\int_{\R^N}|\nabla u|^{p-2}\nabla u\nabla \psi_{k,n}{\rm d}x =\int_{\R^N}\hat{f}\psi_{k,n}{\rm d}x,\quad k,n \in \N.
\end{equation*}
Letting $k \to+\infty$ and exploiting \eqref{convkleft}--\eqref{convkright} we thus achieve
\begin{equation}\label{idn}
\int_{\R^N}|\nabla u|^{p-2}\nabla u\nabla\phi_n {\rm d}x = \int_{\R^N} \hat{f}\phi_n {\rm d}x\quad\forall\, n\in\N.
\end{equation}
\emph{Claim.} $\phi_n\to\phi^+$ in $\mathcal{D}^{1,p}_0(\R^N)$.
\vskip2pt
\noindent In fact, for every $n\in\N$ one has
\begin{equation}\label{gradconv}
\begin{split}
\int_{\R^N} & |\nabla \phi_n - \nabla \phi^+|^p {\rm d}x = \int_{\R^N} |\phi^+ \nabla \theta_n + \theta_n \nabla \phi^+ - \nabla \phi^+|^p {\rm d}x \\
&\leq c\left( \int_{\R^N} (1-\theta_n)^p |\nabla \phi^+|^p {\rm d}x + \int_{B_{2n} \setminus B_n} |\nabla \theta_n|^p (\phi^+)^p {\rm d}x \right) \\
&\leq c \int_{\R^N} (1-\theta_n)^p |\nabla \phi^+|^p {\rm d}x \\
&+ c \left( \int_{B_{2n} \setminus B_n} |\nabla \theta_n|^{\frac{pp^*}{p^*-p}} {\rm d}x \right)^{1-\frac{p}{p^*}} \left( \int_{B_{2n} \setminus B_n} (\phi^+)^{p^*} {\rm d}x \right)^{\frac{p}{p^*}} \\
&= c \int_{\R^N} (1-\theta_n)^p |\nabla \phi^+|^p {\rm d}x + c \|\nabla \theta_n\|_N^p \left( \int_{B_{2n} \setminus B_n} (\phi^+)^{p^*} {\rm d}x \right)^{\frac{p}{p^*}}.
\end{split}
\end{equation}
Recall that $\phi^+\in\mathcal{D}^{1,p}_0(\R^N)$. By \eqref{theta}, Lebesgue's Theorem yields
\begin{equation}\label{aux1}
\lim_{n \to \infty} \int_{\R^N} (1-\theta_n)^p |\nabla \phi^+|^p {\rm d}x = 0
\end{equation}
while, on account of \eqref{embedding},
\begin{equation}\label{auxil2}
\lim_{n \to \infty} \int_{B_{2n} \setminus B_n} (\phi^+)^{p^*} {\rm d}x = 0.
\end{equation}
Since, due to \eqref{theta} again,
\begin{equation*}
\int_{\R^N}|\nabla \theta_n|^N {\rm d}x =\frac{1}{n^N}\int_{\R^N} \left| \theta'\left(\frac{|x|}{n}\right) \right|^N{\rm d}x = \int_{\R^N} |\theta'(|x|)|^N {\rm d}x<+\infty\;\;\forall\, n\in\N,
\end{equation*}
gathering \eqref{gradconv}--\eqref{auxil2} together shows the claim.
\vskip2pt
Consequently,
\begin{equation}\label{convnleft}
\lim_{n \to \infty} \int_{\R^N} |\nabla u|^{p-2}\nabla u \nabla \phi_n {\rm d}x = \int_{\R^N} |\nabla u|^{p-2}\nabla u \nabla \phi^+ {\rm d}x.
\end{equation}
From $\phi_n\uparrow \phi^+$ and $\hat{f}\geq 0$ it then follows
\begin{equation}\label{convnright}
\lim_{n \to \infty}\int_{\R^N} \hat{f} \phi_n {\rm d}x = \int_{\R^N} \hat{f} \phi^+ {\rm d}x
\end{equation}
by Beppo Levi's Theorem. Through \eqref{idn}, \eqref{convnleft}--\eqref{convnright} we thus arrive at
\begin{equation*}\label{id1}
\int_{\R^N} |\nabla u|^{p-2}\nabla u \nabla \phi^+ {\rm d}x = \int_{\R^N} \hat{f} \phi^+ {\rm d}x.
\end{equation*}
Likewise, one has
\begin{equation*}\label{id2}
\int_{\R^N} |\nabla u|^{p-2}\nabla u \nabla \phi^- {\rm d}x = \int_{\R^N} \hat{f} \phi^- {\rm d}x,
\end{equation*}
whence (cf. \eqref{splitting})
\begin{equation*}
\int_{\R^N}|\nabla u|^{p-2}\nabla u\nabla \phi\,{\rm d}x =\int_{\R^N}f(\cdot,u,v,\nabla u, \nabla v)\phi\,
{\rm d}x\quad\forall\,\phi\in\mathcal{D}^{1,p}_0(\R^N). 
\end{equation*}
An analogous argument applies to the second equation in \eqref{prob}.
\end{proof}
%
%
\begin{lemma}\label{strongsol}
Let ${\rm H_1}$--${\rm H_3}$ be satisfied and let $(u,v)\in X_+$ be a distributional solution of \eqref{prob}. Then $ (u,v) $ strongly solves \eqref{prob}.
\end{lemma}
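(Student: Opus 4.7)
The plan is to reduce the statement to an application of the differentiability result \cite[Theorem 2.1]{CM}, whose essential hypothesis is that the right-hand side of each equation lies in $L^2_{\rm loc}(\R^N)$. Condition ${\rm H_3}$ is tailored exactly so that this integrability holds once one has $L^\infty$ bounds on $u,v$ from above and a positive lower bound from below. Accordingly, I would first invoke Lemma~\ref{moser} (which remains valid in the limit $\epsilon=0$ since a distributional solution to \eqref{prob} is, in particular, a weak solution by Lemma~\ref{weaksol}) to get $\|u\|_\infty,\|v\|_\infty\le M$, together with Lemma~\ref{boundbelow} to produce, for each $\rho>0$, a constant $\sigma_\rho>0$ with $u,v\ge\sigma_\rho$ on $B_\rho$.

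Next I would prove that $\hat{f}:=f(\cdot,u,v,\nabla u,\nabla v)\in L^2_{\rm loc}(\R^N)$, and likewise for $\hat{g}$. Using ${\rm H_1(f)}$ and the bounds above, on $B_\rho$ one has
\begin{equation*}
\hat{f}\le \hat{m}_1 a_1\bigl(\sigma_\rho^{\alpha_1}M^{\beta_1}+|\nabla u|^{\gamma_1}+|\nabla v|^{\delta_1}\bigr).
\end{equation*}
The first summand is in $L^{s_p}(B_\rho)\subseteq L^2(B_\rho)$ since $s_p>p'N\ge2$. For the convective pieces I would apply Hölder's inequality, writing
\begin{equation*}
\int_{B_\rho}\!\!a_1^{\,2}|\nabla u|^{2\gamma_1}\,{\rm d}x\le \Bigl(\int_{B_\rho}\!\!a_1^{s_p}\Bigr)^{\!2/s_p}\Bigl(\int_{B_\rho}\!\!|\nabla u|^{p}\Bigr)^{\!2\gamma_1/p}|B_\rho|^{\,\lambda},
\end{equation*}
which is finite precisely under $\tfrac{1}{s_p}+\tfrac{\gamma_1}{p}\le\tfrac12$; the analogous estimate for $|\nabla v|^{\delta_1}$ uses $\tfrac{1}{s_p}+\tfrac{\delta_1}{q}\le\tfrac12$. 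Both are guaranteed by the first half of ${\rm H_3}$, and $\nabla u\in L^p(\R^N)$, $\nabla v\in L^q(\R^N)$ since $(u,v)\in X$. The symmetric argument, using the second half of ${\rm H_3}$, yields $\hat{g}\in L^2_{\rm loc}(\R^N)$.

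With $\hat{f},\hat{g}\in L^2_{\rm loc}(\R^N)$ in hand, and the fact that by Lemma~\ref{weaksol} $(u,v)$ is a weak solution to the two equations $-\Delta_p u=\hat{f}$ and $-\Delta_q v=\hat{g}$, I would apply \cite[Theorem 2.1]{CM} (which is applicable since $p,q>2-\tfrac{1}{N}$) to conclude
\begin{equation*}
|\nabla u|^{p-2}\nabla u,\;|\nabla v|^{q-2}\nabla v\in W^{1,2}_{\rm loc}(\R^N).
\end{equation*}
This is the first half of the definition of \emph{strong} solution. For the second half, fix any $\phi\in C^\infty_0(\R^N)$; since the flux $|\nabla u|^{p-2}\nabla u$ is in $W^{1,2}_{\rm loc}$ on a neighbourhood of $\supp\phi$, the classical divergence theorem legitimises the integration by parts
\begin{equation*}
\int_{\R^N}|\nabla u|^{p-2}\nabla u\cdot\nabla\phi\,{\rm d}x=-\int_{\R^N}\divergence\bigl(|\nabla u|^{p-2}\nabla u\bigr)\phi\,{\rm d}x.
\end{equation*}
Comparing with the weak formulation and invoking the fundamental lemma of the calculus of variations gives $-\divergence(|\nabla u|^{p-2}\nabla u)=\hat{f}$ a.e. in $\R^N$; the same reasoning applies to $v$. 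The main technical point is verifying the $L^2_{\rm loc}$-integrability of the convective terms, where ${\rm H_3}$ is used in a sharp way; everything after the invocation of \cite[Theorem 2.1]{CM} is routine.
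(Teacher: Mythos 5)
Your proof is correct and takes essentially the same route as the paper: you establish $\hat f,\hat g\in L^2_{\rm loc}(\R^N)$ via the $L^\infty$ bounds of Lemmas~\ref{moser}--\ref{boundbelow} together with the Hölder estimate encoded in ${\rm H_3}$ (exactly the reasoning behind \eqref{estonball}), apply \cite[Theorem 2.1]{CM} to obtain $|\nabla u|^{p-2}\nabla u,\ |\nabla v|^{q-2}\nabla v\in W^{1,2}_{\rm loc}(\R^N)$, and then pass from the weak formulation to the pointwise identity by integration by parts and the fundamental lemma of the calculus of variations — which is precisely \cite[Corollary 4.24]{B} invoked in the paper. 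The only stylistic difference is that you spell out the Hölder computation and the divergence-theorem step in detail, whereas the paper refers back to \eqref{estonball} and cites Brezis directly.
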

\begin{proof}
Reasoning as before (see \eqref{estonball}) provides $f(\cdot,u,v,\nabla u,\nabla v)\in L^2_{\rm loc}(\R^N)$. Thanks to \cite[Theorem 2.1]{CM}, this implies $|\nabla u|^{p-2}\nabla u \in W^{1,2}_{\rm loc}(\R^N)$. Moreover, 
\begin{equation*}
-\Delta_p u(x) = f(x,u(x),v(x),\nabla u(x),\nabla v(x))\quad\mbox{a.e. in }\R^N
\end{equation*}
because of \cite[Corollary 4.24]{B}. Similarly about $v$ and the other equation.
\end{proof}
\noindent\emph{Proof of Theorem \ref{main}.}\\
Lemmas \ref{distrsol}--\ref{strongsol} directly give the conclusion.
\begin{rmk}\label{regularity} If ${\rm H_3} $ is replaced by the stronger condition
\begin{itemize}
\item[\underline{$ {\rm H_3'} $}] One has
\begin{equation*}
\begin{split}
\frac{1}{s_p} + \max \left\{ \frac{\gamma_1}{p},\frac{\delta_1}{q} \right\} < \frac{1}{p'N},\qquad
\frac{1}{s_q} + \max \left\{ \frac{\gamma_2}{p},\frac{\delta_2}{q} \right\} < \frac{1}{q'N}
\end{split}
\end{equation*}
\end{itemize}
then any distributional solution $(u,v) \in X_+ $ to \eqref{prob} actually lies in $\mathcal{C}^{1,\alpha}_+ $. To show this, pick $\hat{s}_p, \hat{s}_q>0$ such that
\begin{equation*}
\frac{1}{s_p} +\max \left\{ \frac{\gamma_1}{p},\frac{\delta_1}{q} \right\} \leq \frac{1}{\hat{s}_p}< \frac{1}{p'N}\, ,\quad
\frac{1}{s_q} + \max \left\{ \frac{\gamma_2}{p},\frac{\delta_2}{q} \right\}\leq \frac{1}{\hat{s}_q} < \frac{1}{q'N}.
\end{equation*}
As in the proof of \eqref{estonball}, for every $\rho > 0$ one has
\begin{equation*}
\begin{split}
f(\cdot,u,v,\nabla u,\nabla v) &\leq c_\rho a_1 (1 + |\nabla u|^{\gamma_1} + |\nabla v|^{\delta_1})\in L^{\hat{s}_p}(B_\rho), \\
g(\cdot,u,v,\nabla u,\nabla v) &\leq c_\rho a_2 (1 + |\nabla u|^{\gamma_2} + |\nabla v|^{\delta_2})\in L^{\hat{s}_q}(B_\rho).
\end{split}
\end{equation*}
Hence, known nonlinear regularity results \cite[p. 830]{DB} entail $(u,v)\in \mathcal{C}^{1,\alpha}_+ $.
\end{rmk}
%
%
\begin{rmk}\label{equivalence}
Unfortunately, we were not able to find in the literature a definition of strong solution for elliptic equations driven by \emph{non-linear} operators in divergence form. The one adopted here represents a quite natural extension of the semi-linear case $p=2$, where it is asked that the solution $u\in W^{2,2}_{\rm loc}(\R^N)$ and satisfies the differential equation a.e. in $\R^N$; cf. \cite[p. 219]{GT} and \cite[pp. 7--8]{SS}. We cannot expect $u\in W^{2,q}_{\rm loc}(\R^N)$ for some $q>1$, as the example of \cite[Remark 2.7]{CM} shows. Nevertheless, if $(u,v)\in {\mathcal C}^1_+$ is a distributional solution to \eqref{prob} then $u,v \in W^{2,2}_{\rm loc}(\R^N)$ once $1< p,q < 3$; see \cite[p. 2]{S}. On the other hand, each strong solution turns out distributional. So, our notion of strong solution should be read as a distributional solution with an extra differentiability property on the fields $ |\nabla u|^{p-2}\nabla u $ and $ |\nabla v|^{q-2} \nabla v $. 
\end{rmk}
\section*{Acknowledgement}
The authors thank S.J.N. Mosconi for helpful and stimulating discussions.

U.Guarnotta and S.A. Marano were supported by the following research projects: 1) PRIN 2017 `Nonlinear Differential Problems via Variational, Topological and Set-valued Methods' (Grant No. 2017AYM8XW) of MIUR;
2) PRA 2020--2022 Linea 2 `MO.S.A.I.C.' of the University of Catania. 

A. Moussaoui was supported by the Directorate-General of Scientific Research and Technological Development (DGRSDT).

\end{document}